\providecommand{\abs}[1]{\left\lvert#1 \right\rvert}
\newtheorem{theorem}{Theorem}
\newtheorem{corollary}{Corollary}
\newenvironment{remark}
{\begin{trivlist}\item[\hskip%
\labelsep{{\it \noindent Remark}}]}{\hfill
\end{trivlist}}
\newenvironment{proof}
{\begin{trivlist}\item[\hskip%
\labelsep{\it \noindent Proof.}]}{\hfill $\square$ \rm
\end{trivlist}}
\newenvironment{example}
{\begin{trivlist}\item[\hskip%
\labelsep{{\sc \noindent Example}}]}{\hfill
\end{trivlist}}
\numberwithin{equation}{section}
\begin{document}
\begin{center}
{\huge {\bf Convergence of series of moments \\ on general exponential inequality}} \\
\vspace{1.0cm}
{\Large João Lita da Silva\footnote{\textit{E-mail address:} \texttt{jfls@fct.unl.pt}; \texttt{joao.lita@gmail.com} (corresponding author)}} \\
\vspace{0.1cm}
\textit{Department of Mathematics and GeoBioTec \\ NOVA School of Sciences and Technology \\
NOVA University of Lisbon \\ Quinta da Torre, 2829-516 Caparica,
Portugal} \\
\vspace{0.75cm}
{\Large Vanda Louren\c{c}o\footnote{\textit{E-mail address:} \texttt{vmml@fct.unl.pt}}} \\
\vspace{0.1cm}
\textit{Department of Mathematics and CMA \\ NOVA School of Sciences and Technology \\
NOVA University of Lisbon \\ Quinta da Torre, 2829-516 Caparica,
Portugal}
\end{center}

\bigskip

\bigskip

\begin{abstract}
\noindent For an array $\left\{X_{n,j}, \, 1 \leqslant j \leqslant k_{n}, n \geqslant 1 \right\}$ of random variables and a sequence $\{c_{n} \}$ of positive numbers, sufficient conditions are given under which, for all $\varepsilon > 0$,
$\sum_{n=1}^{\infty} c_{n} \mathbb{E} \bigg[{\displaystyle \max_{1 \leqslant i \leqslant k_{n}}} \allowbreak \Big\lvert\sum_{j=1}^{i} (X_{n,j} - \mathbb{E} \, X_{n,j} I_{\left\{\lvert X_{n,j} \rvert \leqslant \delta \right\}}) \Big\rvert - \varepsilon \bigg]_{+}^{p} < \infty,$ where $x_{+}$ denotes the positive part of $x$ and $p \geqslant 1$, $\delta > 0$. Our statements are announced in a general setting allowing to conclude the previous convergence for well-known dependent structures. As an application, we study complete consistency and consistency in the $r$th mean of cumulative sum type estimators of the change in the mean of dependent observations.
\end{abstract}

\bigskip

{\small{\textit{Keywords:} maximum of partial sums, convergence of series of moments, dependent random variables, CUSUM-type estimators}}

\bigskip

{\small{\textbf{2010 Mathematics Subject Classification:} 60F15; 62F12}}

\bigskip

\section{Introduction}\label{sec:1}

\indent

Probability inequalities play a central role in proofs of asymptotic results for sums of random variables with maximal probability inequalities being key to the establishment of sharp rates of convergence. The main goal of this paper is to obtain the convergence of series of moments involving the maximum of partial row sums of arrays of random variables under a general exponential-type maximal inequality. Our results encompass some of those of \cite{Chen08}, \cite{Shen16} and \cite{Wang14} as particular cases. Furthermore, our proofs are presented in a more direct and optimised way that enables to weaken the assumptions presented in \cite{Shen16}.

The paper is organized as follows. Section \ref{sec:2} presents our results and proofs. In Section \ref{sec:3}, we consider a (general) change-point model and apply our results to a cumulative sum (CUSUM) estimator of the point of shift in the mean of dependent observations, providing sufficient conditions for its complete consistency and consistency in the $r$th mean. Our theoretical results are illustrated via simulation and at the light of the CUSUM estimator described in Section \ref{sec:3}.

\bigskip

In what follows, $\{k_{n} \}$ indicates a sequence of positive integers such that $k_{n} \rightarrow \infty$ as $n \rightarrow \infty$, and $I_{A}$ stands for the indicator random variable of an event $A$. We additionally write $x \wedge y$ and $x \vee y$ for $\min(x,y)$ and $\max(x,y)$, respectively. Moreover, for each $\ell > 0$, we define the function $g_{\ell}(x) = (x \wedge \ell) \vee (-\ell)$, which describes the truncation at level $\ell$.

\section{Main results}\label{sec:2}

\indent

Considering an array $\left\{X_{n,j}, \, 1 \leqslant j \leqslant k_{n}, n \geqslant 1 \right\}$ of random variables, we admit that, for each $n \geqslant 1$ and all $\ell, \lambda, \eta > 0$, there exist $C_{1}, C_{2} > 0$ non-depending on $n, \ell, \lambda, \eta$ such that
\begin{equation}\label{eq:2.1}
\begin{split}
&\mathbb{P} \left\{\max_{1 \leqslant i \leqslant k_{n}} \abs{\sum_{j=1}^{i} \left[g_{\ell}(X_{n,j}) - \mathbb{E} \, g_{\ell}(X_{n,j}) \right]} \geqslant \lambda \right\} \\
&\quad \leqslant \alpha_{n} \mathbb{P} \left\{\max_{1 \leqslant j \leqslant k_{n}} \abs{\left[g_{\ell}(X_{n,j}) - \mathbb{E} \, g_{\ell}(X_{n,j}) \right]} > C_{1} \eta \right\} + \beta_{n} \left\{\frac{\sum_{j=1}^{k_{n}} \mathbb{E} [g_{\ell}(X_{n,j}) - \mathbb{E}g_{\ell}(X_{n,j})]^{2}}{C_{2} \lambda \eta} \right\}^{\lambda/\eta}
\end{split}
\end{equation}
for some sequences $\{\alpha_{n} \}$, $\{\beta_{n} \}$ of nonnegative numbers. Let us point out that well-known inequalities are contained in \eqref{eq:2.1}, particularly, the ones listed below.

\begin{example}
Consider $s_{n}(\ell) := \sum_{j=1}^{k_{n}} \mathbb{E} [g_{\ell}(X_{n,j}) - \mathbb{E}g_{\ell}(X_{n,j})]^{2} \neq 0$ for all $\ell > 0$ and $n \geqslant 1$. \\

\noindent \textbf{1.} The notion of $m$-negatively associated random variables was introduced in \cite{Hu09} (see Definition 2 of \cite{Hu09}). Let $\left\{X_{n,j}, \, 1 \leqslant j \leqslant k_{n}, n \geqslant 1 \right\}$ be an array of \textit{row-wise $m$-negatively associated random variables}, i.e. for each $n \geqslant 1$, the sequence $\left\{X_{n,j}, \, 1 \leqslant j \leqslant k_{n} \right\}$ of random variables is $m$-negatively associated. According to Property $6$ of \cite{Joag-Dev83}, for all $\ell>0$ and each $n \geqslant 1$, $\left\{g_{\ell}(X_{n,j}), \, 1 \leqslant j \leqslant k_{n} \right\}$ is a sequence of $m$-negatively associated random variables, so that $\left\{g_{\ell}(X_{n,j}), \, 1 \leqslant j \leqslant k_{n}, n \geqslant 1 \right\}$ is an array of row-wise $m$-negatively associated random variables. By Lemma 2.1 and Remark 2.1 of \cite{Wu15}, we have, for all $x,a > 0$ and $n \geqslant 1$,
\begin{equation}\label{eq:2.2}
\begin{split}
&\mathbb{P} \left\{\max_{1 \leqslant i \leqslant k_{n}} \abs{\sum_{j=1}^{i} \left[g_{\ell}(X_{n,j}) - \mathbb{E} \, g_{\ell}(X_{n,j}) \right]} \geqslant x \right\} \\
&\quad \leqslant 2m \mathbb{P} \left\{\max_{1 \leqslant j \leqslant k_{n}} \abs{\left[g_{\ell}(X_{n,j}) - \mathbb{E} \, g_{\ell}(X_{n,j}) \right]} > a \right\} + 8m \left[1 + \frac{3xa}{2m s_{n}(\ell)} \right]^{-x/(12ma)} \\
&\quad \leqslant 2m \mathbb{P} \left\{\max_{1 \leqslant j \leqslant k_{n}} \abs{\left[g_{\ell}(X_{n,j}) - \mathbb{E} \, g_{\ell}(X_{n,j}) \right]} > a \right\} + 8m \left[\frac{2m s_{n}(\ell)}{3xa} \right]^{x/(12ma)}.
\end{split}
\end{equation}
In fact, inequality \eqref{eq:2.2} can be directly obtained from \eqref{eq:2.1} by choosing $\lambda = x$, $\eta = 12m a$, $\alpha_{n} = 2m$, $\beta_{n} = 8m$, with $C_{1} = 1/(12m)$ and $C_{2} = 1/(8 m^{2})$. \\

\noindent \textbf{2.} The concept of negatively superadditive dependent random variables appeared in \cite{Hu00} and it has been employed by some authors since them. If for each $n \geqslant 1$, the sequence $\left\{X_{n,j}, \, 1 \leqslant j \leqslant k_{n} \right\}$ of random variables is negatively superadditive dependent, then the array $\left\{X_{n,j}, \, 1 \leqslant j \leqslant k_{n}, n \geqslant 1 \right\}$ is said to be \textit{row-wise negatively superadditive dependent}. Supposing an array $\left\{X_{n,j}, \, 1 \leqslant j \leqslant k_{n}, n \geqslant 1 \right\}$ of row-wise negatively superadditive dependent random variables and $\ell>0$, we have that, for each $n \geqslant 1$, $\left\{g_{\ell}(X_{n,j}), \, 1 \leqslant j \leqslant k_{n} \right\}$ is still a sequence of negatively superadditive dependent random variables (see P3 in \cite{Hu00}), and whence $\left\{g_{\ell}(X_{n,j}), \, 1 \leqslant j \leqslant k_{n}, n \geqslant 1  \right\}$ is an array of negatively superadditive dependent random variables. Thus, for every $x,y > 0$ and $n \geqslant 1$,
\begin{equation}\label{eq:2.3}
\begin{split}
&\mathbb{P} \left\{\max_{1 \leqslant i \leqslant k_{n}} \abs{\sum_{j=1}^{i} \left[g_{\ell}(X_{n,j}) - \mathbb{E} \, g_{\ell}(X_{n,j}) \right]} \geqslant x \right\} \\
&\quad \leqslant 2 \mathbb{P} \left\{\max_{1 \leqslant j \leqslant k_{n}} \abs{\left[g_{\ell}(X_{n,j}) - \mathbb{E} \, g_{\ell}(X_{n,j}) \right]} > y \right\} + 8 \left[\frac{2 s_{n}(\ell)}{3xy} \right]^{x/(12y)}
\end{split}
\end{equation}
(see Lemma 3.1 of \cite{Wang14}), which follows from \eqref{eq:2.1} by taking $\lambda = x$, $\eta = 12a$, $\alpha_{n} = 2$, $\beta_{n} = 8$; in this case, $C_{1} = 1/12$ and $C_{2} = 1/8$.
\end{example}

Throughout the proofs below, given $t > 0$ and an array of random variables $\{X_{n,j}, \, 1 \leqslant j \leqslant k_{n}, n \geqslant 1 \}$, we denote
\begin{equation}
\begin{gathered}\label{eq:2.4}
X_{n,j}'(t) := X_{n,j} I_{\left\{\abs{X_{n,j}} \leqslant t \right\}}, \\
X_{n,j}''(t) := t I_{\left\{X_{n,j} > t \right\}} - t I_{\left\{X_{n,j} < -t \right\}}, \\
S_{n,i}(t) := \sum_{j=1}^{i} \big(X_{n,j} - \mathbb{E} \, X_{n,j} I_{\left\{\abs{X_{n,j}} \leqslant t \right\}} \big), \quad i = 1,2,\ldots,k_{n}, \\
\Gamma_{n}(t) := \bigcap_{j=1}^{k_{n}} \big\{X_{n,j} = X_{n,j}'(t) \big\}.
\end{gathered}
\end{equation}

Our first general statement is the following.

\begin{theorem}\label{thr:1}
Let $\{c_{n} \}$ be a sequence of positive numbers, and $\left\{X_{n,j}, \, 1 \leqslant j \leqslant k_{n}, n \geqslant 1 \right\}$ be an array of random variables satisfying \eqref{eq:2.1} for some sequences $\{\alpha_{n} \}$, $\{\beta_{n} \}$ of nonnegative numbers. If \\

\noindent \textnormal{(i)} for all $\lambda > 0$, $\sum_{n=1}^{\infty} c_{n} (1 + \alpha_{n}) \sum_{j=1}^{k_{n}} \mathbb{P} \big\{\lvert X_{n,j} \rvert > \lambda \big\} < \infty$, \\

\noindent \textnormal{(ii)} there exist $\delta > 0$ and $q \geqslant 1$ such that
\begin{equation*}
\sum_{n=1}^{\infty} c_{n} \beta_{n} \left(\sum_{j=1}^{k_{n}} \mathbb{P} \big\{\lvert X_{n,j} \rvert > \delta \big\} \right)^{q} < \infty \quad and \quad \sum_{n=1}^{\infty} c_{n} \beta_{n} \left(\sum_{j=1}^{k_{n}} \mathbb{V} X_{n,j} I_{\left\{\lvert X_{n,j} \rvert \leqslant \delta \right\}} \right)^{q} < \infty,
\end{equation*}

\noindent then, for all $\varepsilon > 0$,
\begin{equation}\label{eq:2.5}
\sum_{n=1}^{\infty} c_{n} \mathbb{P} \left\{\max_{1 \leqslant i \leqslant k_{n}} \abs{\sum_{j=1}^{i} (X_{n,j} - \mathbb{E} \, X_{n,j} I_{\left\{\lvert X_{n,j} \rvert \leqslant \delta \right\}})} > \varepsilon \right\} < \infty.
\end{equation}
\end{theorem}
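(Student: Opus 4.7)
The plan is a truncation-and-centering argument that reduces everything to \eqref{eq:2.1}. Using the notation of \eqref{eq:2.4}, write
\begin{equation*}
S_{n,i}(\delta)=T_{n,i}+U_{n,i}+V_{n,i},
\end{equation*}
where $T_{n,i}:=\sum_{j=1}^{i}[g_{\delta}(X_{n,j})-\mathbb{E}\,g_{\delta}(X_{n,j})]$ is the centred-truncated sum to which \eqref{eq:2.1} applies directly, $U_{n,i}:=\sum_{j=1}^{i}[X_{n,j}-g_{\delta}(X_{n,j})]$ vanishes on $\Gamma_{n}(\delta)$, and $V_{n,i}:=\sum_{j=1}^{i}\mathbb{E}\,X_{n,j}''(\delta)$ is purely deterministic with $|V_{n,i}|\leqslant R_{n}:=\delta\sum_{j=1}^{k_{n}}\mathbb{P}\{|X_{n,j}|>\delta\}$. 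Splitting $\{\max_{i}|S_{n,i}(\delta)|>\varepsilon\}$ along the union of $\Gamma_{n}(\delta)^{c}$, $\{R_{n}\geqslant\varepsilon/3\}$, and $\{\max_{i}|T_{n,i}|>\varepsilon/3\}$ produces three subseries; the first is summable against $c_{n}$ by (i), and the second follows from (i) together with the Markov-type bound $\mathbf{1}_{\{R_{n}\geqslant\varepsilon/3\}}\leqslant(3/\varepsilon)R_{n}$.

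The heart of the argument is the third event, to which I will apply \eqref{eq:2.1} with $\ell=\delta$, $\lambda=\varepsilon/3$, and a suitable $\eta>0$. Using $g_{\delta}(X_{n,j})=X_{n,j}'(\delta)+X_{n,j}''(\delta)$ together with $(a+b)^{2}\leqslant 2a^{2}+2b^{2}$ and $(a+b)^{q}\leqslant 2^{q-1}(a^{q}+b^{q})$, condition (ii) translates into
\begin{equation*}
\sum_{n=1}^{\infty}c_{n}\beta_{n}\,s_{n}(\delta)^{q}<\infty,\qquad s_{n}(\delta):=\sum_{j=1}^{k_{n}}\mathbb{V}\,g_{\delta}(X_{n,j}).
\end{equation*}
Taking $\eta=\varepsilon/(3q)$ makes the exponent $\lambda/\eta$ in \eqref{eq:2.1} exactly $q$, so the second right-hand piece of \eqref{eq:2.1} becomes a fixed multiple of $\beta_{n}s_{n}(\delta)^{q}$ and is therefore summable against $c_{n}$.

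The step I expect to require the most care is controlling the first right-hand piece of \eqref{eq:2.1}. Since $|g_{\delta}(X_{n,j})-\mathbb{E}g_{\delta}(X_{n,j})|\leqslant 2\delta$, that piece vanishes whenever $C_{1}\eta\geqslant 2\delta$; in the intermediate range $\delta<C_{1}\eta<2\delta$, the identity $\mathbb{P}\{|g_{\delta}(X_{n,j})|>t\}=\mathbb{P}\{|X_{n,j}|>t\}$ valid for $0<t<\delta$ reduces it to $\alpha_{n}\sum_{j=1}^{k_{n}}\mathbb{P}\{|X_{n,j}|>C_{1}\eta-\delta\}$, which is summable by (i). The delicate regime is the small-$\varepsilon$ one in which $\varepsilon/(3q)<2\delta/C_{1}$; there I would switch to $\eta_{n}:=\max\{\varepsilon/(3q),\,2\delta/C_{1}\}$, which uniformly kills the first piece at the cost of a smaller exponent $r:=\varepsilon C_{1}/(6\delta)\leqslant q$ in the second piece, and then use the elementary bound $s_{n}(\delta)^{r}\leqslant 1+s_{n}(\delta)^{q}$ followed by a final bookkeeping step that absorbs the residual series into (i)--(ii) via $s_{n}(\delta)\leqslant 2\sum_{j}\mathbb{V}[X_{n,j}I_{\{|X_{n,j}|\leqslant\delta\}}]+2\delta^{2}\sum_{j}\mathbb{P}\{|X_{n,j}|>\delta\}$.
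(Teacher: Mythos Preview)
Your decomposition $S_{n,i}(\delta)=T_{n,i}+U_{n,i}+V_{n,i}$ and the treatment of $U$ and $V$ are fine (and essentially match the paper's first reduction). The handling of the $\beta_{n}$-piece with the choice $\eta=\varepsilon/(3q)$ is also correct: with that choice the exponent is exactly $q$ and assumption~(ii), together with $s_{n}(\delta)\leqslant 2\sum_{j}\mathbb{V}X_{n,j}'(\delta)+2\delta^{2}\sum_{j}\mathbb{P}\{|X_{n,j}|>\delta\}$, closes that part.

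The gap is in your treatment of the $\alpha_{n}$-piece for small $\varepsilon$. Switching to $\eta=2\delta/C_{1}$ does kill the first term of \eqref{eq:2.1}, but it drops the exponent to $r=\varepsilon C_{1}/(6\delta)$, which can be arbitrarily small. Your ``elementary bound'' $s_{n}(\delta)^{r}\leqslant 1+s_{n}(\delta)^{q}$ then produces the residual series $\sum_{n}c_{n}\beta_{n}$, and nothing in (i)--(ii) forces this to converge; the closing bookkeeping via $s_{n}(\delta)\leqslant\cdots$ cannot absorb a bare $\sum_{n}c_{n}\beta_{n}$. Concretely, take $c_{n}=1/n$, $\beta_{n}=n$, and a situation where $s_{n}(\delta)\asymp n^{-2}$ with $q=1$: then $\sum_{n}c_{n}\beta_{n}s_{n}(\delta)^{q}<\infty$ but $\sum_{n}c_{n}\beta_{n}s_{n}(\delta)^{r}=\sum_{n}n^{-2r}=\infty$ whenever $r\leqslant 1/2$, so the $\beta_{n}$-term is genuinely uncontrolled after your switch of $\eta$.

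The paper resolves this differently: it keeps $\eta=\varepsilon/(2q)$ (so the exponent stays $q$) and instead controls the $\alpha_{n}$-term by splitting the index set. Define
\[
N=\Bigl\{n:\ \sum_{j=1}^{k_{n}}\mathbb{P}\bigl\{|X_{n,j}|>\delta\wedge \tfrac{C_{1}\varepsilon}{8q}\bigr\}\geqslant \tfrac{C_{1}\varepsilon}{16\delta q}\Bigr\}.
\]
For $n\in N$ one simply bounds the probability by $1$ and notes $\sum_{n\in N}c_{n}\leqslant \tfrac{16\delta q}{C_{1}\varepsilon}\sum_{n}c_{n}\sum_{j}\mathbb{P}\{|X_{n,j}|>\delta\wedge\tfrac{C_{1}\varepsilon}{8q}\}<\infty$ by~(i). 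For $n\notin N$ one shows $\max_{j}|\mathbb{E}\,g_{\delta}(X_{n,j})|<C_{1}\varepsilon/(4q)$, so that the event $\{|g_{\delta}(X_{n,j})-\mathbb{E}\,g_{\delta}(X_{n,j})|>C_{1}\varepsilon/(2q)\}$ forces $|X_{n,j}|>C_{1}\varepsilon/(4q)$, and then (i) applies again. This index-set splitting is the missing idea in your proposal; without it, the $\alpha_{n}$-piece cannot be handled uniformly in $\varepsilon$.
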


\begin{proof}
Fixing arbitrarily $\varepsilon > 0$ and choosing $\lambda = \varepsilon/2$, $\eta = \varepsilon/(2q)$ there are $C_{1},C_{2}>0$ such that \eqref{eq:2.1} holds with sequences $\{\alpha_{n} \}$, $\{\beta_{n} \}$. Considering $X_{n,j}'(\delta)$, $X_{n,j}''(\delta)$, $\Gamma_{n}(\delta)$ and $S_{n,i}(\delta)$, $i = 1,2,\ldots,k_{n}$ defined in \eqref{eq:2.4}, it follows $X_{n,j}'(\delta) + X_{n,j}''(\delta) = g_{\delta}(X_{n,j})$ and
\begin{align*}
&\sum_{n=1}^{\infty} c_{n} \mathbb{P} \left\{\max_{1 \leqslant i \leqslant k_{n}} \lvert S_{n,i}(\delta) \rvert > \varepsilon \right\} \\
&\quad = \sum_{n=1}^{\infty} c_{n} \mathbb{P} \left[\left\{\max_{1 \leqslant i \leqslant k_{n}} \lvert S_{n,i}(\delta) \rvert > \varepsilon \right\} \cap \Gamma_{n}(\delta) \right] + \sum_{n=1}^{\infty} c_{n} \mathbb{P} \left[\left\{\max_{1 \leqslant i \leqslant k_{n}} \lvert S_{n,i}(\delta) \rvert > \varepsilon \right\} \cap \Gamma_{n}(\delta)^{\complement} \right] \\
&\quad \leqslant \sum_{n=1}^{\infty} c_{n} \mathbb{P} \left\{\max_{1 \leqslant i \leqslant k_{n}} \abs{\sum_{j=1}^{i} \left[X_{n,j}'(\delta) - \mathbb{E} \, X_{n,j}'(\delta) \right]} > \varepsilon \right\} + \sum_{n=1}^{\infty} c_{n} \sum_{j=1}^{k_{n}} \mathbb{P} \big\{\lvert X_{n,j} \rvert > \delta \big\} \\
&\quad = \sum_{n=1}^{\infty} c_{n} \mathbb{P} \left\{\max_{1 \leqslant i \leqslant k_{n}} \abs{\sum_{j=1}^{i} \left[g_{\delta}(X_{n,j}) - \mathbb{E} g_{\delta}(X_{n,j}) + \mathbb{E} X_{n,j}''(\delta) - X_{n,j}''(\delta) \right]} > \varepsilon \right\} \\
&\qquad + \sum_{n=1}^{\infty} c_{n} \sum_{j=1}^{k_{n}} \mathbb{P} \big\{\lvert X_{n,j} \rvert > \delta \big\} \\
&\quad \leqslant \sum_{n=1}^{\infty} c_{n} \mathbb{P} \left\{\max_{1 \leqslant i \leqslant k_{n}} \abs{\sum_{j=1}^{i} \left[g_{\delta}(X_{n,j}) - \mathbb{E} g_{\delta}(X_{n,j}) \right]} > \frac{\varepsilon}{2} \right\} \\
&\qquad + \sum_{n=1}^{\infty} c_{n} \mathbb{P} \left\{\max_{1 \leqslant i \leqslant k_{n}} \abs{\sum_{j=1}^{i} \left[X_{n,j}''(\delta) - \mathbb{E} X_{n,j}''(\delta) \right]} > \frac{\varepsilon}{2} \right\} + \sum_{n=1}^{\infty} c_{n} \sum_{j=1}^{k_{n}} \mathbb{P} \big\{\lvert X_{n,j} \rvert > \delta \big\} \\
&\quad \leqslant \sum_{n=1}^{\infty} c_{n} \mathbb{P} \left\{\max_{1 \leqslant i \leqslant k_{n}} \abs{\sum_{j=1}^{i} \left[g_{\delta}(X_{n,j}) - \mathbb{E} g_{\delta}(X_{n,j}) \right]} > \frac{\varepsilon}{2} \right\} \\
&\qquad + \sum_{n=1}^{\infty} \frac{2 c_{n}}{\varepsilon} \mathbb{E} \left\{\max_{1 \leqslant i \leqslant k_{n}} \abs{\sum_{j=1}^{i} \left[X_{n,j}''(\delta) - \mathbb{E} X_{n,j}''(\delta) \right]} \right\} + \sum_{n=1}^{\infty} c_{n} \sum_{j=1}^{k_{n}} \mathbb{P} \big\{\lvert X_{n,j} \rvert > \delta \big\} \\
&\quad \leqslant \sum_{n=1}^{\infty} c_{n} \mathbb{P} \left\{\max_{1 \leqslant i \leqslant k_{n}} \abs{\sum_{j=1}^{i} \left[g_{\delta}(X_{n,j}) - \mathbb{E} g_{\delta}(X_{n,j}) \right]} > \frac{\varepsilon}{2} \right\} + \left(1 + \frac{4 \delta}{\varepsilon} \right) \sum_{n=1}^{\infty} c_{n} \sum_{j=1}^{k_{n}} \mathbb{P} \big\{\lvert X_{n,j} \rvert > \delta \big\}.
\end{align*}
According to assumption (i), it suffices to prove
\begin{equation*}
\sum_{n=1}^{\infty} c_{n} \mathbb{P} \left\{\max_{1 \leqslant i \leqslant k_{n}} \abs{\sum_{j=1}^{i} \left[g_{\delta}(X_{n,j}) - \mathbb{E} g_{\delta}(X_{n,j}) \right]} > \frac{\varepsilon}{2} \right\} < \infty.
\end{equation*}
Letting $\mathbf{N}$ be the set of positive integers, and
\begin{equation}\label{eq:2.6}
N = N(\delta, \varepsilon,q) = \left\{n \in \mathbf{N}\colon \sum_{j=1}^{k_{n}} \mathbb{P} \left\{\lvert X_{n,j} \rvert > \delta \wedge \frac{\varepsilon C_{1}}{8q} \right\} \geqslant \frac{C_{1} \varepsilon}{16 \delta q} \right\}
\end{equation}
we have $\mathbf{N} = (\mathbf{N} \setminus N) \cup N$. Since (i) implies
\begin{align*}
&\sum_{n \in N} c_{n} \mathbb{P} \left\{\max_{1 \leqslant i \leqslant k_{n}} \abs{\sum_{j=1}^{i} \left[g_{\delta}(X_{n,j}) - \mathbb{E} g_{\delta}(X_{n,j}) \right]} > \frac{\varepsilon}{2} \right\} \leqslant \sum_{n \in N} c_{n} \\
& \quad \leqslant \sum_{n \in N} \frac{16 \delta q}{C_{1} \varepsilon} c_{n} \sum_{j=1}^{k_{n}} \mathbb{P} \left\{\lvert X_{n,j} \rvert > \delta \wedge \frac{C_{1} \varepsilon q}{8q} \right\} \leqslant \frac{16 \delta q}{C_{1} \varepsilon} \sum_{n=1}^{\infty} c_{n} \sum_{j=1}^{k_{n}} \mathbb{P} \left\{\lvert X_{n,j} \rvert > \delta \wedge \frac{C_{1} \varepsilon}{8q} \right\} < \infty,
\end{align*}
one only needs to show that
\begin{equation}\label{eq:2.7}
\sum_{n \in \mathbf{N} \setminus N} c_{n} \mathbb{P} \left\{\max_{1 \leqslant i \leqslant k_{n}} \abs{\sum_{j=1}^{i} \left[g_{\delta}(X_{n,j}) - \mathbb{E} g_{\delta}(X_{n,j}) \right]} > \frac{\varepsilon}{2} \right\} < \infty.
\end{equation}
From \eqref{eq:2.1}, we obtain
\begin{align*}
&\sum_{n \in \mathbf{N} \setminus N} c_{n} \mathbb{P} \left\{\max_{1 \leqslant i \leqslant k_{n}} \abs{\sum_{j=1}^{i} \left[g_{\delta}(X_{n,j}) - \mathbb{E} g_{\delta}(X_{n,j}) \right]} > \frac{\varepsilon}{2} \right\} \\
&\quad \leqslant \sum_{n \in \mathbf{N} \setminus N} c_{n} \alpha_{n} \mathbb{P} \left\{\max_{1 \leqslant j \leqslant k_{n}} \abs{g_{\delta}(X_{n,j}) - \mathbb{E} g_{\delta}(X_{n,j})} > \frac{C_{1} \varepsilon}{2q} \right\} + \sum_{n \in \mathbf{N} \setminus N} c_{n} \beta_{n} \left[\frac{4q s_{n}(\delta)}{C_{2} \varepsilon^{2}} \right]^{q}.
\end{align*}
Since for every $n \in \mathbf{N} \setminus N$,
\begin{align*}
&\max_{1 \leqslant j \leqslant k_{n}} \lvert \mathbb{E} g_{\delta}(X_{n,j}) \rvert \\
&\quad \leqslant \max_{1 \leqslant j \leqslant k_{n}} \left[\mathbb{E} \lvert X_{n,j} \rvert I_{\left\{\lvert X_{n,j} \rvert \leqslant \delta \right\}} + \delta \mathbb{P} \big\{\lvert X_{n,j} \rvert > \delta \big\} \right] \\
&\quad \leqslant \max_{1 \leqslant j \leqslant k_{n}} \left[\mathbb{E} \lvert X_{n,j} \rvert I_{\left\{\lvert X_{n,j} \rvert \leqslant \delta \wedge \frac{C_{1} \varepsilon}{8q} \right\}} + \mathbb{E} \lvert X_{n,j} \rvert I_{\left\{\delta \wedge \frac{C_{1} \varepsilon}{8q} < \lvert X_{n,j} \rvert \leqslant \delta \right\}} + \delta \mathbb{P} \left\{\lvert X_{n,j} \rvert > \delta \right\} \right] \\
&\quad \leqslant \delta \wedge \frac{C_{1} \varepsilon}{8q} + \max_{1 \leqslant j \leqslant k_{n}} \left[\delta \mathbb{P} \left\{\lvert X_{n,j} \rvert > \delta \wedge \frac{C_{1} \varepsilon}{8q} \right\} + \delta \mathbb{P} \big\{\lvert X_{n,j} \rvert > \delta \big\} \right] \\
&\quad \leqslant \frac{C_{1} \varepsilon}{8q} + \max_{1 \leqslant j \leqslant k_{n}} \left[2 \delta \mathbb{P} \left\{\lvert X_{n,j} \rvert > \delta \wedge \frac{C_{1} \varepsilon}{8q} \right\} \right] \\
&\quad \leqslant \frac{C_{1} \varepsilon}{8q} + 2 \delta \sum_{j=1}^{k_{n}} \mathbb{P} \left\{\lvert X_{n,j} \rvert > \delta \wedge \frac{C_{1} \varepsilon}{8q} \right\} \\
&\quad < \frac{C_{1} \varepsilon}{4q}
\end{align*}
we get
\begin{equation}\label{eq:2.8}
\begin{split}
&\sum_{n \in \mathbf{N} \setminus N} c_{n} \alpha_{n} \mathbb{P} \left\{\max_{1 \leqslant j \leqslant k_{n}} \abs{g_{\delta}(X_{n,j}) - \mathbb{E} g_{\delta}(X_{n,j})} > \frac{C_{1} \varepsilon}{2q} \right\} \\
&\quad \leqslant \sum_{n \in \mathbf{N} \setminus N} c_{n} \alpha_{n} \mathbb{P} \left\{\max_{1 \leqslant j \leqslant k_{n}} \abs{g_{\delta}(X_{n,j})} > \frac{C_{1} \varepsilon}{4q} \right\} \\
&\quad \leqslant \sum_{n \in \mathbf{N} \setminus N} c_{n} \alpha_{n} \sum_{j=1}^{k_{n}} \mathbb{P} \left\{\abs{g_{\delta}(X_{n,j})} > \frac{C_{1} \varepsilon}{4q} \right\} \\
&\quad \leqslant \sum_{n \in \mathbf{N} \setminus N} c_{n} \alpha_{n} \sum_{j=1}^{k_{n}} \mathbb{P} \left\{\lvert X_{n,j} \rvert > \frac{C_{1} \varepsilon}{4q} \right\} \\
&\quad \leqslant \sum_{n=1}^{\infty} c_{n} \alpha_{n} \sum_{j=1}^{k_{n}} \mathbb{P} \left\{\lvert X_{n,j} \rvert > \frac{C_{1} \varepsilon}{4q} \right\} \\
&\quad < \infty
\end{split}
\end{equation}
by virtue of assumption (i). According to Cauchy-Schwarz inequality (see \cite{Mukhopadhyay00}, page $150$) and the elementary inequality, $2uv \leqslant u^{2} + v^{2}$ for all real numbers $u,v$, we have
\begin{align*}
s_{n}(\delta) &:= \sum_{j=1}^{k_{n}} \mathbb{E} [g_{\delta}(X_{n,j}) - \mathbb{E}g_{\delta}(X_{n,j})]^{2} \\
&= \sum_{j=1}^{k_{n}} \mathbb{V} [g_{\delta}(X_{n,j})] \\
&= \sum_{j=1}^{k_{n}} \mathbb{V} X_{n,j}'(\delta) + \sum_{j=1}^{k_{n}} \mathbb{V} X_{n,j}''(\delta) + 2\sum_{j=1}^{k_{n}} \mathrm{Cov}\left[X_{n,j}'(\delta),X_{n,j}''(\delta) \right] \\
&\leqslant \sum_{j=1}^{k_{n}} \mathbb{V} X_{n,j}'(\delta) + \sum_{j=1}^{k_{n}} \mathbb{V} X_{n,j}''(\delta) + 2\sum_{j=1}^{k_{n}} \left[\mathbb{V} X_{n,j}'(\delta) \right]^{1/2} \left[\mathbb{V} X_{n,j}''(\delta) \right]^{1/2} \\
&\leqslant 2\sum_{j=1}^{k_{n}} \mathbb{V} X_{n,j}'(\delta) + 2 \sum_{j=1}^{k_{n}} \mathbb{V} X_{n,j}''(\delta) \\
&\leqslant 2\sum_{j=1}^{k_{n}} \mathbb{V} X_{n,j}'(\delta) + 2 \delta^{2} \sum_{j=1}^{k_{n}} \mathbb{P} \{\lvert X_{n,j} \rvert > \delta \}
\end{align*}
where the last inequality follows from $\mathbb{V} X_{n,j}''(\delta) \leqslant \mathbb{E} [X_{n,j}''(\delta)]^{2} = \delta^{2} \mathbb{P} \{\lvert X_{n,j} \rvert > \delta \}$. By assumption (ii) and the inequality $(u + v)^{q} \leqslant 2^{q - 1} (u^{q} + v^{q})$ for any $u,v \geqslant 0$, $q \geqslant 1$, we obtain
\begin{equation}\label{eq:2.9}
\begin{split}
&\sum_{n \in \mathbf{N} \setminus N} c_{n} \beta_{n} \left[\frac{4q s_{n}(\delta)}{C_{2} \varepsilon^{2}} \right]^{q} \\
&\quad \leqslant \sum_{n \in \mathbf{N} \setminus N} c_{n} \beta_{n} \frac{2^{3q-1}q^{q}}{C_{2}^{q} \varepsilon^{2q}} \left\{\left[2 \sum_{j=1}^{k_{n}} \mathbb{V} X_{n,j}'(\delta) \right]^{q} + \left(2 \delta^{2} \sum_{j=1}^{k_{n}} \mathbb{P} \{\lvert X_{n,j} \rvert > \delta \} \right)^{q} \right\} \\
&\quad \leqslant \frac{2^{4q-1}q^{q}}{C_{2}^{q} \varepsilon^{2q}} \sum_{n=1}^{\infty} c_{n}  \beta_{n} \left(\sum_{j=1}^{k_{n}} \mathbb{V} X_{n,j} I_{\left\{\lvert X_{n,j} \rvert \leqslant \delta \right\}} \right)^{q} + \frac{2^{4q-1} q^{q}\delta^{2q}}{C_{2}^{q} \varepsilon^{2q}} \sum_{n=1}^{\infty} c_{n}  \beta_{n} \left(\sum_{j=1}^{k_{n}} \mathbb{P} \big\{\lvert X_{n,j} \rvert > \delta \big\} \right)^{q} \\
&\quad < \infty.
\end{split}
\end{equation}
The thesis is established.
\end{proof}

Inequality \eqref{eq:2.1} with $\{\alpha_{n} \}$ and $\{\beta_{n} \}$ bounded sequences allows us to improve assumption (ii) of Theorem~\ref{thr:1}.

\begin{theorem}\label{thr:2}
Let $\{c_{n} \}$ be a sequence of positive numbers, and $\left\{X_{n,j}, \, 1 \leqslant j \leqslant k_{n}, n \geqslant 1 \right\}$ be an array of random variables satisfying \eqref{eq:2.1} for sequences $\{\alpha_{n} \}$, $\{\beta_{n} \}$ of nonnegative numbers such that $\alpha_{n} = O(1) = \beta_{n}$ as $n \rightarrow \infty$. If \\

\noindent \textnormal{(i')} for all $\lambda > 0$, $\sum_{n=1}^{\infty} c_{n} \sum_{j=1}^{k_{n}} \mathbb{P} \big\{\lvert X_{n,j} \rvert > \lambda \big\} < \infty$, \\

\noindent \textnormal{(ii')} there exist $\delta > 0$ and $q \geqslant 1$ such that $\sum_{n=1}^{\infty} c_{n} \left(\sum_{j=1}^{k_{n}} \mathbb{V} X_{n,j} I_{\left\{\lvert X_{n,j} \rvert \leqslant \delta \right\}} \right)^{q} < \infty$, \\

\noindent then, for all $\varepsilon > 0$, \eqref{eq:2.5} holds.
\end{theorem}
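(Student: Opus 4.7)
The proof would closely parallel that of Theorem~\ref{thr:1}, with $\alpha_{n} = O(1)$ and $\beta_{n} = O(1)$ playing the role of the missing clause $\sum c_{n} \beta_{n} (\sum_{j} \mathbb{P} \{\lvert X_{n,j} \rvert > \delta\})^{q} < \infty$ from (ii). After fixing $\varepsilon > 0$ and setting $\lambda = \varepsilon/2$, $\eta = \varepsilon/(2q)$ in \eqref{eq:2.1}, the opening truncation-and-decomposition block (truncation at $\delta$, splitting off $\Gamma_{n}(\delta)^{\complement}$, Markov bound on the $X_{n,j}''(\delta)$-term) is identical to that of Theorem~\ref{thr:1}; note that every finiteness claim appearing there is controlled by $\sum c_{n} \sum_{j} \mathbb{P} \{\lvert X_{n,j}\rvert > \cdots\}$, without an $\alpha_{n}$ or $\beta_{n}$ weight, so (i') alone suffices. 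This reduces the problem to the standard key estimate
\begin{equation*}
\sum_{n=1}^{\infty} c_{n} \mathbb{P} \left\{\max_{1 \leqslant i \leqslant k_{n}} \abs{\sum_{j=1}^{i} [g_{\delta}(X_{n,j}) - \mathbb{E} g_{\delta}(X_{n,j})]} > \frac{\varepsilon}{2} \right\} < \infty.
\end{equation*}

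Next, I would partition $\mathbf{N} = N \cup (\mathbf{N} \setminus N)$ with the very same set $N$ from \eqref{eq:2.6}. The contribution from $N$ is finite by (i'), exactly as in Theorem~\ref{thr:1}. The pivotal new observation, which substitutes the absent tail clause of (ii), is that for any $n \in \mathbf{N} \setminus N$ the definition of $N$ already gives the uniform bound
\begin{equation*}
\sum_{j=1}^{k_{n}} \mathbb{P} \{\lvert X_{n,j} \rvert > \delta\} \leqslant \sum_{j=1}^{k_{n}} \mathbb{P} \Big\{\lvert X_{n,j} \rvert > \delta \wedge \tfrac{C_{1} \varepsilon}{8q}\Big\} < \frac{C_{1} \varepsilon}{16 \delta q} =: K,
\end{equation*}
so that $\big(\sum_{j} \mathbb{P} \{\lvert X_{n,j} \rvert > \delta\}\big)^{q} \leqslant K^{q-1} \sum_{j} \mathbb{P} \{\lvert X_{n,j}\rvert > \delta\}$ for every $n \in \mathbf{N} \setminus N$.

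With these two ingredients in place, the remainder is a routine rerun: on $\mathbf{N} \setminus N$, inequality \eqref{eq:2.1} produces an $\alpha_{n}$-piece, controlled via $\alpha_{n} = O(1)$ and (i') exactly as in \eqref{eq:2.8}, and a $\beta_{n}$-piece, to which the variance estimate $s_{n}(\delta) \leqslant 2 \sum_{j} \mathbb{V} X_{n,j} I_{\{\lvert X_{n,j} \rvert \leqslant \delta\}} + 2\delta^{2} \sum_{j} \mathbb{P} \{\lvert X_{n,j}\rvert > \delta\}$ together with $(u+v)^{q} \leqslant 2^{q-1}(u^{q}+v^{q})$ are applied as in \eqref{eq:2.9}. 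The variance summand is then summable by (ii') and $\beta_{n} = O(1)$, while the tail summand is summable by combining the uniform bound $K$ above with $\beta_{n} = O(1)$ and (i'). The only genuinely new step---and hence the main obstacle---is to recognise that the auxiliary split $N,\mathbf{N}\setminus N$, introduced in Theorem~\ref{thr:1} purely as a technical device for comparing centerings, already encodes the uniform tail-sum bound that permits the trade $(\sum \mathbb{P})^{q} \leqslant K^{q-1} \sum \mathbb{P}$ and thereby renders the first condition of (ii) redundant under boundedness of $\{\beta_{n}\}$.
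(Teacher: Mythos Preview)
Your proposal is correct and follows essentially the same route as the paper: the paper likewise repeats the proof of Theorem~\ref{thr:1} verbatim except for \eqref{eq:2.9}, where on $\mathbf{N}\setminus N$ it uses $\beta_{n}\leqslant C$ together with the very bound $\sum_{j}\mathbb{P}\{\lvert X_{n,j}\rvert>\delta\}<C_{1}\varepsilon/(16\delta q)$ to reduce $\big(\sum_{j}\mathbb{P}\{\lvert X_{n,j}\rvert>\delta\}\big)^{q}$ to a constant times the first power and then invokes (i'). Your key observation---that the set $N$ already encodes the uniform tail-sum bound enabling the trade $(\sum\mathbb{P})^{q}\leqslant K^{q-1}\sum\mathbb{P}$---is precisely the paper's argument.
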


\begin{proof}
The proof follows exactly the same steps of the preceding one, except \eqref{eq:2.9}. From $\alpha_{n} = O(1) = \beta_{n}$, $n \rightarrow \infty$ there is $C > 0$ (non-depending on $n$) such that $\alpha_{n}, \beta_{n} \leqslant C$ for all $n \geqslant 1$ and instead of \eqref{eq:2.9}, we have
\begin{align*}
&\sum_{n \in \mathbf{N} \setminus N} c_{n} \beta_{n} \left[\frac{4q s_{n}(\delta)}{C_{2} \varepsilon^{2}} \right]^{q} \\
&\quad \leqslant \sum_{n \in \mathbf{N} \setminus N} c_{n} \frac{2^{3q-1}q^{q} C}{C_{2}^{q} \varepsilon^{2q}} \left\{\left[2 \sum_{j=1}^{k_{n}} \mathbb{V} X_{n,j}'(\delta) \right]^{q} + \left(2 \delta^{2} \sum_{j=1}^{k_{n}} \mathbb{P} \{\lvert X_{n,j} \rvert > \delta \} \right)^{q} \right\} \\
&\quad \leqslant \sum_{n=1}^{\infty} c_{n} \frac{2^{4q-1}q^{q} C}{C_{2}^{q} \varepsilon^{2q}} \left(\sum_{j=1}^{k_{n}} \mathbb{V} X_{n,j} I_{\left\{\lvert X_{n,j} \rvert \leqslant \delta \right\}} \right)^{q} + \sum_{n \in \mathbf{N} \setminus N} c_{n} \frac{\delta^{q} C_{1}^{q} C}{2 C_{2}^{q} \varepsilon^{q}} \left(\frac{16 \delta q}{C_{1} \varepsilon} \sum_{j=1}^{k_{n}} \mathbb{P} \big\{\lvert X_{n,j} \rvert > \delta \big\} \right)^{q} \\
&\quad \leqslant \frac{2^{4q-1} q^{q}C}{C_{2}^{q} \varepsilon^{2q}} \sum_{n=1}^{\infty} c_{n} \left(\sum_{j=1}^{k_{n}} \mathbb{V} X_{n,j} I_{\left\{\lvert X_{n,j} \rvert \leqslant \delta \right\}} \right)^{q} + \sum_{n \in \mathbf{N} \setminus N} c_{n} \frac{8 q \delta^{q+1} C_{1}^{q-1} C}{C_{2}^{q} \varepsilon^{q+1}} \sum_{j=1}^{k_{n}} \mathbb{P} \big\{\lvert X_{n,j} \rvert > \delta \big\} \\
&\quad \leqslant \frac{2^{4q-1} q^{q}C}{C_{2}^{q} \varepsilon^{2q}} \sum_{n=1}^{\infty} c_{n} \left(\sum_{j=1}^{k_{n}} \mathbb{V} X_{n,j} I_{\left\{\lvert X_{n,j} \rvert \leqslant \delta \right\}} \right)^{q} + \frac{8 q \delta^{q+1} C_{1}^{q-1} C}{C_{2}^{q} \varepsilon^{q+1}} \sum_{n=1}^{\infty} c_{n} \sum_{j=1}^{k_{n}} \mathbb{P} \big\{\lvert X_{n,j} \rvert > \delta \big\} \\
&\quad < \infty
\end{align*}
because for each $n \in \mathbf{N} \setminus N$, $\sum_{j=1}^{k_{n}} \mathbb{P} \big\{\lvert X_{n,j} \rvert > \delta \big\} \leqslant \sum_{j=1}^{k_{n}} \mathbb{P} \big\{\lvert X_{n,j} \rvert > \delta \wedge C_{1} \varepsilon/(8q) \big\} < C_{1} \varepsilon/(16 \delta q)$. The proof is complete.
\end{proof}

\begin{remark}
Let us observe that Theorem 1 in \cite{Chen08} or Theorem 1 in \cite{Hu09} can be both obtained from Theorem~\ref{thr:2} by using inequality \eqref{eq:2.2} (particularly with $m = 1$ in the former). Further, by employing inequality \eqref{eq:2.3}, Theorem 3.3 of \cite{Wang14} is also a consequence of Theorem~\ref{thr:2}.
\end{remark}

The next result presents sufficient conditions under which a series of moments involving the maximum of partial row sums of arrays of random variables converges.

\begin{theorem}\label{thr:3}
Let $p \geqslant 1$, $\{c_{n} \}$ be a sequence of positive numbers, and $\left\{X_{n,j}, \, 1 \leqslant j \leqslant k_{n}, n \geqslant 1 \right\}$ be an array of random variables verifying \eqref{eq:2.1} for some sequences $\{\alpha_{n} \}$, $\{\beta_{n} \}$ of nonnegative numbers. If there exists a constant $\delta > 0$ such that \\

\noindent \textnormal{(a)} for any $\lambda > 0$,
\begin{equation*}
\sum_{n=1}^{\infty} c_{n} \mathbb{P} \left\{{\displaystyle \max_{1 \leqslant i \leqslant k_{n}}} \abs{\sum_{j=1}^{i} \left(X_{n,j} - \mathbb{E} X_{n,j}I_{\left\{\lvert X_{n,j} \rvert \leqslant \delta \right\}} \right)} > \lambda \right\} < \infty,
\end{equation*}

\noindent \textnormal{(b)} ${\displaystyle \max_{1 \leqslant j \leqslant k_{n}}} \mathbb{P} \left\{\lvert X_{n,j} \rvert > \delta \right\} = o(1)$ and $\sum_{j=1}^{k_{n}} \mathbb{P} \left\{\lvert X_{n,j} \rvert > \delta \right\} = O(1)$ as $n \rightarrow \infty$, \\

\noindent \textnormal{(c)} $\sum_{n=1}^{\infty} \sum_{j=1}^{k_{n}} c_{n} (1 + \alpha_{n}) \int_{\delta^{p}}^{\infty} \mathbb{P} \left\{\lvert X_{n,j} \rvert^{p} > t \right\} \mathrm{d}t < \infty$, \\

\noindent \textnormal{(d)} for some $q > p$, $\sum_{n=1}^{\infty} c_{n} \beta_{n} \int_{\delta^{p}}^{\infty} \left(\sum_{j=1}^{k_{n}} \mathbb{P} \big\{\lvert X_{n,j} \rvert^{p} > t \big\} \mathrm{d}t \right)^{q} < \infty$,
\begin{equation*}
\sum_{n=1}^{\infty} c_{n} \beta_{n} \left(\sum_{j=1}^{k_{n}} \mathbb{V} X_{n,j} I_{\left\{\lvert X_{n,j} \rvert \leqslant \delta \right\}} \right)^{q} < \infty \quad and \quad \sum_{n=1}^{\infty} c_{n} \beta_{n} \left(\sum_{j=1}^{k_{n}} \mathbb{E} \lvert X_{n,j} \rvert I_{\left\{\lvert X_{n,j} \rvert > \delta \right\}} \right)^{q} < \infty,
\end{equation*}

\noindent then for all $\varepsilon > 0$,
\begin{equation}\label{eq:2.10}
\sum_{n=1}^{\infty} c_{n} \mathbb{E} \left[\max_{1 \leqslant i \leqslant k_{n}} \abs{\sum_{j=1}^{i} (X_{n,j} - \mathbb{E} \, X_{n,j} I_{\left\{\lvert X_{n,j} \rvert \leqslant \delta \right\}})} - \varepsilon \right]_{+}^{p} < \infty.
\end{equation}
\end{theorem}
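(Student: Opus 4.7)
The plan is to begin from the tail-integral representation
\[
\mathbb{E}\bigl[(Y_n - \varepsilon)_+^p\bigr] = \int_0^{\infty} p\,t^{p-1}\,\mathbb{P}\{Y_n > \varepsilon + t\}\,\mathrm{d}t,
\]
with $Y_n := \max_{1 \leqslant i \leqslant k_n}\lvert S_{n,i}(\delta)\rvert$, and to split this integral at $t = \delta$. On $(0,\delta]$ the integrand is bounded by $p\,t^{p-1}\,\mathbb{P}\{Y_n > \varepsilon\}$, so multiplying by $c_n$ and summing in $n$ yields at most $\delta^p \sum_n c_n \mathbb{P}\{Y_n > \varepsilon\}$, which is finite by hypothesis~(a).

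For $t > \delta$ the key idea is to re-truncate at level $t$ rather than at $\delta$. The identity $S_{n,i}(\delta) = S_{n,i}(t) + \sum_{j \leqslant i} \mathbb{E}X_{n,j} I_{\{\delta<\lvert X_{n,j}\rvert\leqslant t\}}$ together with the uniform bias bound $B_n := \sum_j \mathbb{E}\lvert X_{n,j}\rvert I_{\{\lvert X_{n,j}\rvert>\delta\}}$ implies $\{Y_n > \varepsilon+t\} \subset \{\max_i \lvert S_{n,i}(t)\rvert > \varepsilon + t - B_n\}$. Splitting on $\Gamma_n(t)$, the complement produces the escape term $\sum_j \mathbb{P}\{\lvert X_{n,j}\rvert>t\}$, whose integral against $p t^{p-1} c_n$ reduces (via $u = t^p$) to the series in~(c); on $\Gamma_n(t)$ one rewrites $S_{n,i}(t)$ through centred $g_t$-sums plus the $X''_{n,j}(t)$ piece handled by Markov's inequality, exactly as in the proof of Theorem~\ref{thr:1}.

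Inequality~\eqref{eq:2.1} is then applied to the $g_t$-sum with $\lambda$ of order $\varepsilon + t$ and $\eta = \lambda/q$, so that the exponent $\lambda/\eta = q > p$ matches the fixed power appearing in~(d). The $\alpha_n$-term reduces to $\alpha_n \sum_j \mathbb{P}\{\lvert X_{n,j}\rvert>c\,t\}$ for some constant $c>0$ and is again absorbed by~(c). The $\beta_n$-term produces $\beta_n[s_n(t)/t^2]^q$ up to constants; using
\[
s_n(t) \leqslant 4\sum_j \mathbb{V}X_{n,j}I_{\{\lvert X_{n,j}\rvert\leqslant\delta\}} + 4\sum_j \mathbb{V}X_{n,j}I_{\{\delta<\lvert X_{n,j}\rvert\leqslant t\}} + 2t^2 \sum_j \mathbb{P}\{\lvert X_{n,j}\rvert>t\}
\]
and the crude estimate $\sum_j \mathbb{V}X_{n,j}I_{\{\delta<\lvert X_{n,j}\rvert\leqslant t\}} \leqslant t\,B_n$, the three resulting contributions, multiplied by $t^{p-1-2q}$ and integrated over $[\delta,\infty)$, are absorbed by the three summable series in~(d) respectively. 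The assumption $q > p$ is precisely what makes the deterministic tail integrals $\int_\delta^{\infty} t^{p-1-2q}\,\mathrm{d}t$ and $\int_\delta^{\infty} t^{p-1-q}\,\mathrm{d}t$ convergent.

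The main obstacle I foresee is the careful bookkeeping needed to convert $\mathbb{P}\{\max_j\lvert g_t(X_{n,j}) - \mathbb{E}g_t(X_{n,j})\rvert > C_1\eta\}$ inside~\eqref{eq:2.1} into an honest tail of $\lvert X_{n,j}\rvert$ uniformly in $t \geqslant \delta$, which requires dominating $\max_j\lvert\mathbb{E}g_t(X_{n,j})\rvert$ by a small multiple of $\eta \sim t/q$. A set-splitting argument patterned on the set $N$ of the proof of Theorem~\ref{thr:1}---treating separately those indices $n$ where $\sum_j \mathbb{P}\{\lvert X_{n,j}\rvert>\delta\}$ is small, on which the centring is negligible, from those where it is not (handled through hypotheses~(a) and~(b))---should overcome this uniformly.
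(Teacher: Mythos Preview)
Your overall architecture matches the paper's: tail-integral representation, split into small and large $t$, re-truncate at level $t$ on the large range, apply \eqref{eq:2.1} with $\lambda/\eta=q$, and decompose $s_n(t)$ into the three pieces absorbed by (d). The place where your plan breaks down is the handling of the deterministic bias $\sum_{j\leqslant i}\mathbb{E}X_{n,j}I_{\{\delta<\lvert X_{n,j}\rvert\leqslant t\}}$.

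You bound this by $B_n=\sum_j\mathbb{E}\lvert X_{n,j}\rvert I_{\{\lvert X_{n,j}\rvert>\delta\}}$ and pass to $\{\max_i\lvert S_{n,i}(t)\rvert>\varepsilon+t-B_n\}$, then say you will take $\lambda$ ``of order $\varepsilon+t$'' in \eqref{eq:2.1}. But nothing in (a)--(d) forces $B_n$ to be bounded in $n$; hypothesis (d) only gives $\sum_n c_n\beta_n B_n^q<\infty$. For indices $n$ with $B_n$ large, $\varepsilon+t-B_n$ may be negative on the entire range $t\in(\delta,\infty)$, the inclusion becomes vacuous, and you cannot take $\lambda$ proportional to $t$. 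The paper's remedy is to use instead the crude bound
\[
\Bigl\lvert\sum_{j\leqslant i}\mathbb{E}X_{n,j}I_{\{\delta<\lvert X_{n,j}\rvert\leqslant t^{1/p}\}}\Bigr\rvert
\;\leqslant\; t^{1/p}\sum_{j=1}^{k_n}\mathbb{P}\{\lvert X_{n,j}\rvert>\delta\}\;\leqslant\; C_\delta\, t^{1/p},
\]
available from the $O(1)$ half of (b), and then to \emph{rescale the integration variable upfront}: writing the moment as $(2C_\delta)^p\int_0^\infty\mathbb{P}\{Y_n>\varepsilon+2C_\delta t^{1/p}\}\,\mathrm{d}t$, the bias eats at most $C_\delta t^{1/p}$ and still leaves a threshold $C_\delta t^{1/p}$ uniformly comparable to $t^{1/p}$, for every $n$.

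For the $\alpha_n$-term the paper also proceeds differently from your proposed set-splitting in $n$. It introduces a free parameter $\rho>0$, moves the lower integration limit to $(\delta/\rho)^p\vee\delta^p$ (the extra piece is again covered by (a)), and uses the $o(1)$ half of (b) directly: for all $n\geqslant n_0(\rho)$ one has $\max_j\mathbb{P}\{\lvert X_{n,j}\rvert>\delta\}<\rho/2$, and combined with $t\geqslant(\delta/\rho)^p$ this gives $\max_j\lvert\mathbb{E}g_{t^{1/p}}(X_{n,j})\rvert<2\rho t^{1/p}$. Choosing $\rho=C_1C_\delta/(6q)$ makes $C_1\eta=3\rho t^{1/p}$, so the centring is swallowed and the $\alpha_n$-term becomes $\alpha_n\sum_j\mathbb{P}\{\lvert X_{n,j}\rvert>\rho t^{1/p}\}$, which after the substitution $u=\rho^p t$ is exactly the series in (c). Thus both halves of hypothesis (b) are used in an essential way, and neither use is captured by your outline as written.
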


\begin{proof}
From assumption (b), there is a positive constant $C_{\delta}$ such that
\begin{equation*}
\sum_{j=1}^{k_{n}} \mathbb{P} \big\{\lvert X_{n,j} \rvert > \delta \big\} \leqslant C_{\delta}
\end{equation*}
for all $n$. Consider $X_{n,j}'(t^{1/p})$, $X_{n,j}''(t^{1/p})$, $\Gamma_{n}(t^{1/p})$, $S_{n,i}(\delta) := \sum_{j=1}^{i} \big(X_{n,j} - \mathbb{E} \, X_{n,j} I_{\left\{\lvert X_{n,j} \rvert \leqslant \delta \right\}} \big)$, $i = 1,2,\ldots,k_{n}$ given by \eqref{eq:2.4}, fix arbitrarily $\varepsilon > 0$ and let $\rho$ be a positive number that will be determined later. Hence,
\begin{align*}
\sum_{n=1}^{\infty} c_{n} \mathbb{E} \left(\max_{1 \leqslant i \leqslant k_{n}} \lvert S_{n,i}(\delta) \rvert - \varepsilon \right)_{+}^{p} &= \sum_{n=1}^{\infty} c_{n} \int_{0}^{\infty} \mathbb{P} \left\{\max_{1 \leqslant i \leqslant k_{n}} \lvert S_{n,i}(\delta) \rvert > \varepsilon + u^{1/p} \right\} \mathrm{d}u \\
&= (2C_{\delta})^{p} \sum_{n=1}^{\infty} c_{n} \int_{0}^{\infty} \mathbb{P} \left\{\max_{1 \leqslant i \leqslant k_{n}} \lvert S_{n,i}(\delta) \rvert > \varepsilon + 2C_{\delta}t^{1/p} \right\} \mathrm{d}t \\
& \leqslant (2C_{\delta})^{p} \left(\frac{\delta}{\rho} \right)^{p} \vee \delta^{p} \sum_{n=1}^{\infty} c_{n} \mathbb{P} \left\{\max_{1 \leqslant i \leqslant k_{n}} \lvert S_{n,i}(\delta) \rvert > \varepsilon \right\} \\
&\quad + (2C_{\delta})^{p} \sum_{n=1}^{\infty} c_{n} \int_{(\delta/\rho)^{p} \vee \delta^{p}}^{\infty} \mathbb{P} \left\{\max_{1 \leqslant i \leqslant k_{n}} \lvert S_{n,i}(\delta) \rvert > 2C_{\delta}t^{1/p} \right\} \mathrm{d}t
\end{align*}
and by assumption (a), \eqref{eq:2.10} holds if
\begin{equation*}
\sum_{n=1}^{\infty} c_{n} \int_{(\delta/\rho)^{p} \vee \delta^{p}}^{\infty} \mathbb{P} \left\{\max_{1 \leqslant i \leqslant k_{n}} \lvert S_{n,i}(\delta) \rvert > 2C_{\delta} t^{1/p} \right\} \mathrm{d}t < \infty.
\end{equation*}
Thus,
\begin{align*}
&\sum_{n=1}^{\infty} c_{n} \int_{(\delta/\rho)^{p} \vee \delta^{p}}^{\infty} \mathbb{P} \left\{\max_{1 \leqslant i \leqslant k_{n}} \lvert S_{n,i}(\delta) \rvert > 2C_{\delta}t^{1/p} \right\} \mathrm{d}t \\
&\quad = \sum_{n=1}^{\infty} c_{n} \int_{(\delta/\rho)^{p} \vee \delta^{p}}^{\infty} \mathbb{P} \left[\left\{\max_{1 \leqslant i \leqslant k_{n}} \lvert S_{n,i}(\delta) \rvert > 2C_{\delta}t^{1/p} \right\} \cap \Gamma_{n}\big(t^{1/p} \big) \right] \mathrm{d}t \\
&\qquad + \sum_{n=1}^{\infty} c_{n} \int_{(\delta/\rho)^{p} \vee \delta^{p}}^{\infty} \mathbb{P} \left[\left\{\max_{1 \leqslant i \leqslant k_{n}} \lvert S_{n,i}(\delta) \rvert > 2C_{\delta}t^{1/p} \right\} \cap \Gamma_{n}\big(t^{1/p} \big)^{\complement} \right] \mathrm{d}t \\
&\quad \leqslant \sum_{n=1}^{\infty} c_{n} \int_{(\delta/\rho)^{p} \vee \delta^{p}}^{\infty} \mathbb{P} \left\{\max_{1 \leqslant i \leqslant k_{n}} \abs{\sum_{j=1}^{i} \left(X_{n,j} I_{\left\{\lvert X_{n,j} \rvert \leqslant t^{1/p} \right\}} - \mathbb{E} \, X_{n,j} I_{\left\{\lvert X_{n,j} \rvert \leqslant \delta \right\}} \right)} > 2C_{\delta}t^{1/p} \right\} \mathrm{d}t \\
& \qquad + \sum_{n=1}^{\infty} c_{n} \int_{\delta^{p}}^{\infty} \sum_{j=1}^{k_{n}} \mathbb{P} \big\{\lvert X_{n,j} \rvert > t^{1/p} \big\} \mathrm{d}t.
\end{align*}
According to (c), it suffices to prove
\begin{equation}\label{eq:2.11}
\sum_{n=1}^{\infty} c_{n} \int_{(\delta/\rho)^{p} \vee \delta^{p}}^{\infty} \mathbb{P} \left\{\max_{1 \leqslant i \leqslant k_{n}} \abs{\sum_{j=1}^{i} \left(X_{n,j} I_{\left\{\lvert X_{n,j} \rvert \leqslant t^{1/p} \right\}} - \mathbb{E} \, X_{n,j} I_{\left\{\lvert X_{n,j} \rvert \leqslant \delta \right\}} \right)} > 2C_{\delta}t^{1/p} \right\} \mathrm{d}t < \infty.
\end{equation}
Since
\begin{align*}
\max_{1 \leqslant i \leqslant k_{n}} \abs{\sum_{j=1}^{i} \mathbb{E} \, X_{n,j} I_{\left\{\delta < \lvert X_{n,j} \rvert \leqslant t^{1/p} \right\}}} &\leqslant \sum_{j=1}^{k_{n}} \mathbb{E} \lvert X_{n,j} \rvert I_{\left\{\delta < \lvert X_{n,j} \rvert \leqslant t^{1/p} \right\}} \\
&\leqslant t^{1/p} \sum_{j=1}^{k_{n}} \mathbb{P} \big\{\lvert X_{n,j} \rvert > \delta \big\} \\
&\leqslant C_{\delta} t^{1/p}
\end{align*}
and
\begin{align*}
&\mathbb{P} \left\{\max_{1 \leqslant i \leqslant k_{n}} \abs{\sum_{j=1}^{i} \left[\mathbb{E} X_{n,j}''\big(t^{1/p} \big) - X_{n,j}''\big(t^{1/p} \big) \right]} > \frac{C_{\delta} t^{1/p}}{2} \right\} \\
&\quad \leqslant \frac{2}{C_{\delta} t^{1/p}} \mathbb{E} \left\{\max_{1 \leqslant i \leqslant k_{n}} \abs{\sum_{j=1}^{i}\left[\mathbb{E} X_{n,j}''\big(t^{1/p} \big) - X_{n,j}''\big(t^{1/p} \big) \right]} \right\} \\
&\quad \leqslant \frac{4}{C_{\delta} t^{1/p}} \sum_{j=1}^{k_{n}} \mathbb{E} \big\lvert X_{n,j}''\big(t^{1/p} \big) \big\rvert \\
&\quad = \frac{4}{C_{\delta}} \sum_{j=1}^{k_{n}} \mathbb{P} \big\{\lvert X_{n,j} \rvert > t^{1/p} \big\}
\end{align*}
we obtain, for each $t \geqslant (\delta/\rho)^{p} \vee \delta^{p}$,
\begin{align*}
&\mathbb{P} \left\{\max_{1 \leqslant i \leqslant k_{n}} \abs{\sum_{j=1}^{i} \left(X_{n,j} I_{\left\{\lvert X_{n,j} \rvert \leqslant t^{1/p} \right\}} - \mathbb{E} \, X_{n,j} I_{\left\{\lvert X_{n,j} \rvert \leqslant \delta \right\}} \right)} > 2C_{\delta}t^{1/p} \right\} \\
&\quad = \mathbb{P} \left\{\max_{1 \leqslant i \leqslant k_{n}} \abs{\sum_{j=1}^{i} \left[X_{n,j}'\big(t^{1/p} \big) - \mathbb{E} X_{n,j}'\big(t^{1/p} \big) + \mathbb{E} \, X_{n,j} I_{\left\{\delta < \lvert X_{n,j} \rvert \leqslant t^{1/p} \right\}} \right]} > 2C_{\delta}t^{1/p} \right\} \\
&\quad \leqslant \mathbb{P} \left\{\max_{1 \leqslant i \leqslant k_{n}} \abs{\sum_{j=1}^{i} \left[X_{n,j}'\big(t^{1/p} \big) - \mathbb{E} X_{n,j}'\big(t^{1/p} \big) \right]} + \max_{1 \leqslant i \leqslant k_{n}} \abs{\sum_{j=1}^{i} \mathbb{E} \, X_{n,j} I_{\left\{\delta < \lvert X_{n,j} \rvert \leqslant t^{1/p} \right\}}} > 2C_{\delta}t^{1/p} \right\} \\
&\quad \leqslant \mathbb{P} \left\{\max_{1 \leqslant i \leqslant k_{n}} \abs{\sum_{j=1}^{i} \left[X_{n,j}'\big(t^{1/p} \big) - \mathbb{E} X_{n,j}'\big(t^{1/p} \big) \right]} > C_{\delta}t^{1/p} \right\} \\
& \quad \leqslant \mathbb{P} \left\{\max_{1 \leqslant i \leqslant k_{n}} \abs{\sum_{j=1}^{i} \left[g_{t^{1/p}}(X_{n,j}) - \mathbb{E} g_{t^{1/p}}(X_{n,j}) \right]} > \frac{C_{\delta}t^{1/p}}{2} \right\} \\
&\qquad + \mathbb{P} \left\{\max_{1 \leqslant i \leqslant k_{n}} \abs{\sum_{j=1}^{i} \left[\mathbb{E} X_{n,j}''\big(t^{1/p} \big) - X_{n,j}''\big(t^{1/p} \big) \right]} > \frac{C_{\delta}t^{1/p}}{2} \right\} \\
& \quad \leqslant \mathbb{P} \left\{\max_{1 \leqslant i \leqslant k_{n}} \abs{\sum_{j=1}^{i} \left[g_{t^{1/p}}(X_{n,j}) - \mathbb{E} g_{t^{1/p}}(X_{n,j}) \right]} > \frac{C_{\delta} t^{1/p}}{2} \right\} + \frac{4}{C_{\delta}} \sum_{j=1}^{k_{n}} \mathbb{P} \big\{\lvert X_{n,j} \rvert > t^{1/p} \big\}
\end{align*}
Thus, \eqref{eq:2.11} holds if
\begin{equation}\label{eq:2.12}
\sum_{n=1}^{\infty} c_{n} \int_{(\delta/\rho)^{p} \vee \delta^{p}}^{\infty} \mathbb{P} \left\{\max_{1 \leqslant i \leqslant k_{n}} \abs{\sum_{j=1}^{i} \left[g_{t^{1/p}}(X_{n,j}) - \mathbb{E} g_{t^{1/p}}(X_{n,j}) \right]} > \frac{C_{\delta} t^{1/p}}{2} \right\} < \infty
\end{equation}
and
\begin{equation}\label{eq:2.13}
\sum_{n=1}^{\infty} c_{n} \int_{(\delta/\rho)^{p} \vee \delta^{p}}^{\infty} \sum_{j=1}^{k_{n}} \mathbb{P} \big\{\lvert X_{n,j} \rvert > t^{1/p} \big\} < \infty.
\end{equation}
Since \eqref{eq:2.13} follows from assumption (c), it remains to show \eqref{eq:2.12}. By assumption (b), there is $n_{0} = n_{0}(\delta,\rho)$ such that
\begin{equation}\label{eq:2.14}
\max_{1 \leqslant j \leqslant k_{n}} \mathbb{P} \left\{\lvert X_{n,j} \rvert > \delta \right\} < \frac{\rho}{2}, \quad \forall n \geqslant n_{0}
\end{equation}
and whence, for any $t \geqslant (\delta/\rho)^{p} \vee \delta^{p}$ and $n \geqslant n_{0}$, we have
\begin{equation}\label{eq:2.15}
\begin{split}
&\max_{1 \leqslant j \leqslant k_{n}} \lvert \mathbb{E} g_{t^{1/p}}(X_{n,j}) \rvert \\
&\quad \leqslant \max_{1 \leqslant j \leqslant k_{n}} \left[\mathbb{E} \lvert X_{n,j} \rvert I_{\left\{\lvert X_{n,j} \rvert \leqslant t^{1/p} \right\}} + t^{1/p} \mathbb{P} \big\{\lvert X_{n,j} \rvert > t^{1/p} \big\} \right] \\
&\quad \leqslant \max_{1 \leqslant j \leqslant k_{n}} \left[\mathbb{E} \lvert X_{n,j} \rvert I_{\left\{\lvert X_{n,j} \rvert \leqslant \delta \right\}} + \mathbb{E} \lvert X_{n,j} \rvert I_{\left\{\delta < \lvert X_{n,j} \rvert \leqslant t^{1/p} \right\}} + t^{1/p} \mathbb{P} \big\{\lvert X_{n,j} \rvert > t^{1/p} \big\} \right] \\
&\quad \leqslant \delta + \max_{1 \leqslant j \leqslant k_{n}} \left[t^{1/p} \mathbb{P} \big\{\lvert X_{n,j} \rvert > \delta \big\} + t^{1/p} \mathbb{P} \big\{\lvert X_{n,j} \rvert > t^{1/p} \big\} \right] \\
&\quad \leqslant \rho t^{1/p} + \max_{1 \leqslant j \leqslant k_{n}} \left[2 t^{1/p} \mathbb{P} \big\{\lvert X_{n,j} \rvert > \delta \big\} \right] \\
&\quad < 2\rho t^{1/p}.
\end{split}
\end{equation}
By taking $\lambda = C_{\delta} t^{1/p}/2$ and $\eta = C_{\delta}t^{1/p}/(2q)$, there are $C_{1},C_{2} > 0$ such that \eqref{eq:2.1} holds with the sequences $\{\alpha_{n} \}$, $\{\beta_{n} \}$. Putting $\rho = C_{1}C_{\delta}/(6q)$, estimates \eqref{eq:2.15} and
\begin{align*}
s_{n}(t^{1/p})& = \sum_{j=1}^{k_{n}} \mathbb{E} [g_{t^{1/p}}(X_{n,j}) - \mathbb{E}g_{t^{1/p}}(X_{n,j})]^{2} \\
&= \sum_{j=1}^{k_{n}} \mathbb{V} [g_{t^{1/p}}(X_{n,j})] \\
&\leqslant 2\sum_{j=1}^{k_{n}} \mathbb{V} X_{n,j}'(t^{1/p}) + 2\sum_{j=1}^{k_{n}} \mathbb{V} X_{n,j}''(t^{1/p}) \\
&\leqslant 2\sum_{j=1}^{k_{n}} \mathbb{V} \left[X_{n,j}'(\delta) + X_{n,j}I_{\left\{\delta < \lvert X_{n,j} \rvert \leqslant t^{1/p} \right\}} \right]
+ 2 \sum_{j=1}^{k_{n}} \mathbb{E} \left[X_{n,j}''(t^{1/p}) \right]^{2} \\
&\leqslant 4\sum_{j=1}^{k_{n}} \mathbb{V} X_{n,j}'(\delta) + 4\sum_{j=1}^{k_{n}} \mathbb{V} X_{n,j}I_{\left\{\delta < \lvert X_{n,j} \rvert \leqslant t^{1/p} \right\}}
+ 2 t^{2/p} \sum_{j=1}^{k_{n}} \mathbb{P} \big\{\lvert X_{n,j} \rvert > t^{1/p} \big\} \\
&\leqslant 4\sum_{j=1}^{k_{n}} \mathbb{V} X_{n,j} I_{\left\{\abs{X_{n,j}} \leqslant \delta \right\}} + 4\sum_{j=1}^{k_{n}} \mathbb{E} X_{n,j}^{2} I_{\left\{\delta < \lvert X_{n,j} \rvert \leqslant t^{1/p} \right\}} + 2 t^{2/p} \sum_{j=1}^{k_{n}} \mathbb{P} \big\{\lvert X_{n,j} \rvert > t^{1/p} \big\} \\
&\leqslant 4\sum_{j=1}^{k_{n}} \mathbb{V} X_{n,j} I_{\left\{\abs{X_{n,j}} \leqslant \delta \right\}} + 4t^{1/p}\sum_{j=1}^{k_{n}} \mathbb{E} \lvert X_{n,j} \rvert I_{\left\{\lvert X_{n,j} \rvert > \delta \right\}} + 2 t^{2/p} \sum_{j=1}^{k_{n}} \mathbb{P} \big\{\lvert X_{n,j} \rvert > t^{1/p} \big\}
\end{align*}
for all $t \geqslant (\delta/\rho)^{p} \vee \delta^{p}$, lead to
\begin{align*}
&\mathbb{P} \left\{\max_{1 \leqslant i \leqslant k_{n}} \abs{\sum_{j=1}^{i} \left[g_{t^{1/p}}(X_{n,j}) - \mathbb{E} g_{t^{1/p}}(X_{n,j}) \right]} > \frac{C_{\delta} t^{1/p}}{2} \right\} \\
&\quad \leqslant \alpha_{n} \mathbb{P} \left\{\max_{1 \leqslant j \leqslant k_{n}} \lvert g_{t^{1/p}}(X_{n,j}) - \mathbb{E} g_{t^{1/p}}(X_{n,j}) \rvert > 3 \rho t^{1/p} \right\} + \frac{\beta_{n} 2^{2q} q^{q}}{C_{\delta}^{2q} C_{2}^{q}} \left[\frac{s_{n}(t^{1/p})}{t^{2/p}} \right]^{q} \\
&\quad \leqslant \alpha_{n} \mathbb{P} \left\{\max_{1 \leqslant j \leqslant k_{n}} \lvert g_{t^{1/p}}(X_{n,j}) \rvert > \rho t^{1/p} \right\} + \frac{\beta_{n} 2^{3q-1} q^{q}}{C_{\delta}^{2q} C_{2}^{q}} \left(\frac{4}{t^{2/p}} \sum_{j=1}^{k_{n}} \mathbb{V} X_{n,j} I_{\left\{\abs{X_{n,j}} \leqslant \delta \right\}} \right)^{q} \\
&\qquad + \frac{\beta_{n} 2^{3q-1} q^{q}}{C_{\delta}^{2q} C_{2}^{q}} \left(\frac{4}{t^{1/p}} \sum_{j=1}^{k_{n}} \mathbb{E} \lvert X_{n,j} \rvert I_{\left\{\lvert X_{n,j} \rvert > \delta \right\}} + 2 \sum_{j=1}^{k_{n}} \mathbb{P} \big\{\lvert X_{n,j} \rvert > t^{1/p} \big\} \right)^{q} \\
&\quad \leqslant \alpha_{n} \sum_{j=1}^{k_{n}} \mathbb{P} \left\{\lvert g_{t^{1/p}}(X_{n,j}) \rvert > \rho t^{1/p} \right\} + \frac{\beta_{n} 2^{5q-1} q^{q}}{C_{\delta}^{2q} C_{2}^{q}t^{2q/p}} \left( \sum_{j=1}^{k_{n}} \mathbb{V} X_{n,j} I_{\left\{\abs{X_{n,j}} \leqslant \delta \right\}} \right)^{q} \\
&\qquad + \frac{\beta_{n} 2^{4q-2} q^{q}}{C_{\delta}^{2q} C_{2}^{q}} \left(\frac{4}{t^{1/p}} \sum_{j=1}^{k_{n}} \mathbb{E} \lvert X_{n,j} \rvert I_{\left\{\lvert X_{n,j} \rvert > \delta \right\}} \right)^{q} + \frac{\beta_{n} 2^{4q-2} q^{q}}{C_{\delta}^{2q} C_{2}^{q}} \left(2 \sum_{j=1}^{k_{n}} \mathbb{P} \big\{\lvert X_{n,j} \rvert > t^{1/p} \big\} \right)^{q} \\
&\quad \leqslant \alpha_{n} \sum_{j=1}^{k_{n}} \mathbb{P} \left\{\lvert X_{n,j} \rvert > \rho t^{1/p} \right\} + \frac{\beta_{n} 2^{5q-1} q^{q}}{C_{\delta}^{2q} C_{2}^{q}t^{2q/p}} \left( \sum_{j=1}^{k_{n}} \mathbb{V} X_{n,j} I_{\left\{\abs{X_{n,j}} \leqslant \delta \right\}} \right)^{q} \\
& \qquad + \frac{\beta_{n} 2^{6q-2} q^{q}}{C_{\delta}^{2q} C_{2}^{q} t^{q/p}} \left(\sum_{j=1}^{k_{n}} \mathbb{E} \lvert X_{n,j} \rvert I_{\left\{\lvert X_{n,j} \rvert > \delta \right\}} \right)^{q} + \frac{\beta_{n} 2^{5q-2} q^{q}}{C_{\delta}^{2q} C_{2}^{q}} \left(\sum_{j=1}^{k_{n}} \mathbb{P} \big\{\lvert X_{n,j} \rvert > t^{1/p} \big\} \right)^{q}
\end{align*}
for every $t \geqslant (\delta/\rho)^{p} \vee \delta^{p}$ and $n$ large enough. Thereby,
\begin{align*}
&\sum_{n=1}^{\infty} c_{n} \int_{(\delta/\rho)^{p} \vee \delta^{p}}^{\infty} \mathbb{P} \left\{\max_{1 \leqslant i \leqslant k_{n}} \abs{\sum_{j=1}^{i} \left[g_{t^{1/p}}(X_{n,j}) - \mathbb{E} g_{t^{1/p}}(X_{n,j}) \right]} > \frac{C_{\delta}t^{1/p}}{2} \right\}\mathrm{d}t \\
&\quad \leqslant \sum_{n=1}^{\infty} c_{n} \alpha_{n} \int_{(\delta/\rho)^{p}}^{\infty} \sum_{j=1}^{k_{n}} \mathbb{P} \big\{\lvert X_{n,j} \rvert > \rho t^{1/p} \big\} \mathrm{d}t + \frac{2^{5q-1} q^{q}}{C_{\delta}^{2q} C_{2}^{q}} \sum_{n=1}^{\infty} c_{n} \beta_{n} \int_{\delta^{p}}^{\infty} \frac{\mathrm{d}t}{t^{2q/p}} \left(\sum_{j=1}^{k_{n}} \mathbb{V} X_{n,j} I_{\left\{\abs{X_{n,j}} \leqslant \delta \right\}} \right)^{q} \\
& \qquad + \frac{2^{6q-2} q^{q}}{C_{\delta}^{2q} C_{2}^{q}} \sum_{n=1}^{\infty} c_{n} \beta_{n} \int_{\delta^{p}}^{\infty} \frac{\mathrm{d}t}{t^{q/p}} \left(\sum_{j=1}^{k_{n}} \mathbb{E} \lvert X_{n,j} \rvert I_{\left\{\lvert X_{n,j} \rvert > \delta \right\}} \right)^{q} \\
&\qquad + \frac{2^{5q-2} q^{q}}{C_{\delta}^{2q} C_{2}^{q}} \sum_{n=1}^{\infty} c_{n} \beta_{n} \int_{\delta^{p}}^{\infty} \left(\sum_{j=1}^{k_{n}} \mathbb{P} \big\{\lvert X_{n,j} \rvert^{p} > t \big\} \mathrm{d}t \right)^{q} \\
&\quad \leqslant \rho^{-p} \sum_{n=1}^{\infty} c_{n} \alpha_{n} \int_{\delta^{p}}^{\infty} \sum_{j=1}^{k_{n}} \mathbb{P} \big\{\lvert X_{n,j} \rvert^{p} > u \big\} \mathrm{d}u + \frac{p 2^{5q-2}q^{q} \delta^{p - 2q}}{(2q - p) C_{\delta}^{2q} C_{2}^{q}} \sum_{n=1}^{\infty} c_{n} \beta_{n} \left(\sum_{j=1}^{k_{n}} \mathbb{V} X_{n,j} I_{\left\{\abs{X_{n,j}} \leqslant \delta \right\}} \right)^{q} \\
& \qquad + \frac{p 2^{6q-2}q^{q} \delta^{p - q}}{(q - p) C_{\delta}^{2q} C_{2}^{q}} \sum_{n=1}^{\infty} c_{n} \beta_{n} \left(\sum_{j=1}^{k_{n}} \mathbb{E} \lvert X_{n,j} \rvert I_{\left\{\lvert X_{n,j} \rvert > \delta \right\}} \right)^{q} \\
&\qquad + \frac{2^{5q-2} q^{q}}{C_{\delta}^{2q} C_{2}^{q}}\sum_{n=1}^{\infty} c_{n} \beta_{n} \int_{\delta^{p}}^{\infty} \left(\sum_{j=1}^{k_{n}} \mathbb{P} \big\{\lvert X_{n,j} \rvert^{p} > t \big\} \mathrm{d}t \right)^{q} \\
&\quad < \infty,
\end{align*}
and the proof is concluded.
\end{proof}

By means of Theorems~\ref{thr:2} and~\ref{thr:3}, we state the following corollary.

\begin{corollary}\label{cor:1}
Let $p \geqslant 1$, $\{c_{n} \}$ be a sequence of positive numbers, and $\left\{X_{n,j}, \, 1 \leqslant j \leqslant k_{n}, n \geqslant 1 \right\}$ be an array of random variables satisfying \eqref{eq:2.1} for sequences $\{\alpha_{n} \}$, $\{\beta_{n} \}$ of nonnegative numbers such that $\alpha_{n} = O(1) = \beta_{n}$ as $n \rightarrow \infty$. If assumption \textnormal{(i')} holds, and there exists a constant $\delta > 0$ such that \\

\noindent \textnormal{(b')} $\sum_{j=1}^{k_{n}} \mathbb{P} \left\{\lvert X_{n,j} \rvert > \delta \right\} = o(1)$ as $n \rightarrow \infty$, \\

\noindent \textnormal{(c')} $\sum_{n=1}^{\infty} \sum_{j=1}^{k_{n}} c_{n} \int_{\delta^{p}}^{\infty} \mathbb{P} \left\{\lvert X_{n,j} \rvert^{p} > t \right\} \mathrm{d}t < \infty$, \\

\noindent \textnormal{(d')} for some $q > p$,
\begin{equation*}
\sum_{n=1}^{\infty} c_{n} \left(\sum_{j=1}^{k_{n}} \mathbb{V} X_{n,j} I_{\left\{\lvert X_{n,j} \rvert \leqslant \delta \right\}} \right)^{q} < \infty \quad and \quad \sum_{n=1}^{\infty} c_{n} \left(\sum_{j=1}^{k_{n}} \mathbb{E} \lvert X_{n,j} \rvert I_{\left\{\lvert X_{n,j} \rvert > \delta \right\}} \right)^{q} < \infty,
\end{equation*}

\noindent then, for all $\varepsilon > 0$, \eqref{eq:2.10} holds.
\end{corollary}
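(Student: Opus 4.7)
The plan is to derive Corollary~\ref{cor:1} as a direct consequence of Theorems~\ref{thr:2} and~\ref{thr:3}, by checking that the hypotheses (i'), (b'), (c'), (d') of the corollary, combined with $\alpha_n = O(1) = \beta_n$, imply all four hypotheses (a), (b), (c), (d) of Theorem~\ref{thr:3}.

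First I would invoke Theorem~\ref{thr:2} to supply hypothesis (a) of Theorem~\ref{thr:3}. Indeed, (i') of the corollary coincides with (i') of Theorem~\ref{thr:2}, while the first summability condition in (d') is precisely (ii') of Theorem~\ref{thr:2} (for the same $q > p \geqslant 1$). Theorem~\ref{thr:2} therefore yields \eqref{eq:2.5} for every $\varepsilon > 0$, which is assumption (a) of Theorem~\ref{thr:3} (after renaming $\varepsilon$ to $\lambda$).

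Next, I would verify assumptions (b), (c) and (d) of Theorem~\ref{thr:3}. Assumption (b) is immediate from (b'), since $\sum_{j=1}^{k_n} \mathbb{P}\{|X_{n,j}| > \delta\} = o(1)$ simultaneously controls $\max_{1 \leqslant j \leqslant k_n} \mathbb{P}\{|X_{n,j}| > \delta\}$ and guarantees that the sum is $O(1)$. Assumption (c) follows from (c') together with $1 + \alpha_n$ being uniformly bounded, and the second and third summability conditions in (d) of Theorem~\ref{thr:3} follow from the corresponding conditions in (d') and the boundedness of $\beta_n$.

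The main obstacle is the first summability condition in (d) of Theorem~\ref{thr:3}, namely
\begin{equation*}
\sum_{n=1}^{\infty} c_n \beta_n \int_{\delta^p}^{\infty} \left(\sum_{j=1}^{k_n} \mathbb{P}\{|X_{n,j}|^p > t\} \right)^{q} \mathrm{d}t < \infty,
\end{equation*}
which has to be extracted from the linear integral in (c'). To bridge this gap, I would observe that for $t \geqslant \delta^p$ the inclusion $\{|X_{n,j}|^p > t\} \subseteq \{|X_{n,j}| > \delta\}$ yields $\sum_j \mathbb{P}\{|X_{n,j}|^p > t\} \leqslant \sum_j \mathbb{P}\{|X_{n,j}| > \delta\}$. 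Since (b') implies $\sup_n \sum_j \mathbb{P}\{|X_{n,j}| > \delta\} \leqslant M$ for some finite $M$, the elementary bound
\begin{equation*}
\left(\sum_{j=1}^{k_n} \mathbb{P}\{|X_{n,j}|^p > t\} \right)^{q} \leqslant M^{q-1} \sum_{j=1}^{k_n} \mathbb{P}\{|X_{n,j}|^p > t\}
\end{equation*}
holds for all $t \geqslant \delta^p$ and all $n$, reducing the $q$-th power integral to the linear one, which is finite by (c') and the boundedness of $\beta_n$. With all four hypotheses of Theorem~\ref{thr:3} established, its conclusion \eqref{eq:2.10} follows directly.
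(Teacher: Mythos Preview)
Your proposal is correct and follows essentially the same route as the paper: verify that (i'), (b'), (c'), (d') together with $\alpha_n,\beta_n = O(1)$ give the hypotheses of Theorem~\ref{thr:2} (supplying (a)) and of Theorem~\ref{thr:3}, with the only nontrivial step being the reduction of the $q$th-power integral in (d) to the linear integral (c') via the uniform bound on $\sum_j \mathbb{P}\{|X_{n,j}|>\delta\}$ coming from (b'). The paper's proof is slightly terser, using (b') to get $\sum_j \mathbb{P}\{|X_{n,j}|^p>t\}\leqslant 1$ for $n$ large, but your use of a uniform bound $M$ with the factor $M^{q-1}$ is equivalent and in fact cleaner, since it avoids treating small $n$ separately.
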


\begin{proof}
Since (b') ensures $\sum_{j=1}^{k_{n}} \mathbb{P} \left\{\lvert X_{n,j} \rvert^{p} > t \right\} \leqslant \sum_{j=1}^{k_{n}} \mathbb{P} \left\{\lvert X_{n,j} \rvert > \delta \right\} \leqslant 1$ for any $t \geqslant \delta^{p}$ and $n$ large enough, assumption (c') entails
\begin{equation*}
\sum_{n=1}^{\infty} c_{n} \beta_{n} \int_{\delta^{p}}^{\infty} \left(\sum_{j=1}^{k_{n}} \mathbb{P} \big\{\lvert X_{n,j} \rvert^{p} > t \big\} \mathrm{d}t \right)^{q} \leqslant C \sum_{n=1}^{\infty} c_{n} \int_{\delta^{p}}^{\infty} \sum_{j=1}^{k_{n}} \mathbb{P} \big\{\lvert X_{n,j} \rvert^{p} > t \big\} \mathrm{d}t < \infty
\end{equation*}
where $C$ is some positive constant (non-depending on $n$) such that $\beta_{n} \leqslant C$ for all $n$. The thesis is a consequence of Theorems~\ref{thr:2} and~\ref{thr:3} by noting that (b') implies (b).
\end{proof}

It is worthy to note that assumption (i) of Theorem 3.1 in \cite{Shen16} implies both conditions (i') and (c') of Corollary~\ref{cor:1} when $\alpha_{n} = 2$ and $\beta_{n} = 8$. Indeed, the example below shows that $\int_{\delta^{p}}^{\infty} \mathbb{P} \big\{\lvert X_{n,j} \rvert^{p} > t \big\} \, \mathrm{d}t$ can be of smaller order than $\mathbb{E} \lvert X_{n,j} \rvert^{p} I_{\left\{\lvert X_{n,j} \rvert > \delta \right\}}$ on the one hand, and on the other hand, $\mathbb{P} \big\{\lvert X_{n,j} \rvert > \varepsilon \big\}$ can also be of smaller order than $\mathbb{E}\lvert X_{n,j} \rvert^{p} I_{\left\{\lvert X_{n,j} \rvert > \varepsilon \right\}}$.

\begin{example}
Let $\{\xi_{n}, \, n \geqslant 1 \}$ be a sequence of i.i.d. random variables and $X_{n,j} := \xi_{j}/b_{n}$ where $\{b_{n} \}$ is a sequence of positive constants satisfying $b_{n} \rightarrow \infty$ as $n \rightarrow \infty$. If the tail distribution of $\lvert \xi_{1} \rvert^{p}$ is rapidly varying with index $-\infty$ (e.g. $\lvert \xi_{1} \rvert^{p}$ having exponential distribution) and $\delta > 0$, then
\begin{equation*}
\frac{\int_{\delta^{p}}^{\infty} \mathbb{P} \big\{\lvert X_{n,j} \rvert^{p} > t \big\} \, \mathrm{d}t}{\mathbb{E} \lvert X_{n,j} \rvert^{p} I_{\left\{\lvert X_{n,j} \rvert > \delta \right\}}} \leqslant \frac{\int_{(\delta b_{n})^{p}}^{\infty} \mathbb{P} \{\lvert \xi_{1} \rvert^{p} > u \} \, \mathrm{d}u}{(\delta b_{n})^{p} \mathbb{P} \{\lvert \xi_{1} \rvert > \delta b_{n} \}} = o(1), \quad n \rightarrow \infty
\end{equation*}
(see \cite{Embrechts97}, page $570$), i.e. $\int_{\delta^{p}}^{\infty} \mathbb{P} \big\{\lvert X_{n,j} \rvert^{p} > t \big\} \, \mathrm{d}t = o \left(\mathbb{E} \lvert X_{n,j} \rvert^{p} I_{\left\{\lvert X_{n,j} \rvert > \delta \right\}} \right)$, $n \rightarrow \infty$. Instead, if $\mathbb{P} \{\lvert \xi_{1} \rvert^{p} > t \} \sim L(t)/t$ as $t \rightarrow \infty$, where $L$ is a slowly varying function at $\infty$ locally bounded in $[t_{0},\infty)$, for some $t_{0} \geqslant 0$, and such that $\int_{t_{0}}^{\infty} L(t)/t \, \mathrm{d}t < \infty$, then, for each fixed $\varepsilon > 0$,
\begin{equation*}
\frac{\int_{\varepsilon^{p}}^{\infty} \mathbb{P} \{\lvert X_{n,j} \rvert^{p} > t \} \, \mathrm{d}t}{\varepsilon^{p} \mathbb{P} \{\lvert X_{n,j} \rvert > \varepsilon \}} = \frac{\int_{(\varepsilon b_{n})^{p}}^{\infty} \mathbb{P} \{\lvert \xi_{1} \rvert^{p} > u \} \, \mathrm{d}u}{(\varepsilon b_{n})^{p} \mathbb{P} \{\lvert \xi_{1} \rvert > \varepsilon b_{n} \}} \longrightarrow \infty, \quad n \rightarrow \infty
\end{equation*}
(see \cite{Embrechts97}, page $567$) and whence $\mathbb{P} \big\{\lvert X_{n,j} \rvert > \varepsilon \big\} = o\left(\mathbb{E}\lvert X_{n,j} \rvert^{p} I_{\left\{\lvert X_{n,j} \rvert > \varepsilon \right\}} \right)$, $n \rightarrow \infty$.
\end{example}

\section{Change-point estimator}\label{sec:3}

\indent

In the nineties, the problem of a change in the mean of a sequence of observations received attention by many authors (see \cite{Antoch95}, \cite{Antoch96}, \cite{Csorgo97}, \cite{Kokoszka98} among others). Let $\{\mu_{n,k}, \, 1 \leqslant k \leqslant n, n \geqslant 1 \}$ be a triangular array of real numbers and $\{X_{n,k}, 1 \leqslant k \leqslant n, \, n \geqslant 1  \}$ be a triangular array of zero-mean random variables. Consider the model
\begin{equation}\label{eq:3.1}
Y_{n,k} =
\begin{cases}
  \mu_{n,k} + X_{n,k}, & 1 \leqslant k \leqslant k^{\ast} \\[5pt]
  \mu_{n,k} + \Delta_{n} + X_{n,k}, & k^{\ast} < k \leqslant n
\end{cases}
\end{equation}
where $k^{\ast} = [n \tau^{\ast}]$ is an unknown change-point, $\Delta_{n}$ is the (unknown) change-amount, and $\tau^{\ast} \in (0,1)$ is fixed (here, $[ x ]$ stands for the integer part of $x$). In \cite{Kokoszka98}, the authors proved the (weak) consistency of CUSUM-type estimator of the point of shift in the mean of a sequence of observations, i.e. by considering the estimators
\begin{equation*}
\widehat{k}_{n} := \min\left\{k\colon \lvert U_{n,k} \rvert = \max_{1 \leqslant j < n} \lvert U_{n,j} \rvert \right\}
\end{equation*}
and $\widehat{\tau}_{n} := \widehat{k}_{n}/n$ of $k^{\ast}$ and $\tau^{\ast}$, respectively, with
\begin{equation*}
U_{n,k} := \frac{(n - k)^{1 - \gamma}}{n^{1 - \gamma} k^{\gamma}} \sum_{j=1}^{k} Y_{n,j} - \frac{k^{1 - \gamma}}{n^{1 - \gamma} (n - k)^{\gamma}} \sum_{j=k+1}^{n} Y_{n,j} \qquad (0 \leqslant \gamma < 1)
\end{equation*}
conditions were given for $\widehat{\tau}_{n}$ to be weakly consistent of $\tau^{\ast}$. The next statement gives us the complete convergence, in the sense of Hsu and Robbins \cite{Hsu47}, of $\widehat{\tau}_{n}$, i.e. we provide conditions under which $\widehat{\tau}_{n}$ is \emph{completely consistent} for $\tau^{\ast}$. We assume that $C(\, \cdot \,)$ denotes a positive generic constant depending (only) on the arguments specified within the parentheses that may assume different values at each appearance.

\begin{theorem}\label{thr:4}
Suppose that in model \eqref{eq:3.1}, $\Delta_{n} \neq 0$ for any $n \geqslant 1$, and $\left\{X_{n,j}, \, 1 \leqslant j \leqslant n, n \geqslant 1 \right\}$ is an array of zero-mean random variables satisfying \eqref{eq:2.1} for sequences $\{\alpha_{n} \}$, $\{\beta_{n} \}$ of nonnegative numbers such that $\alpha_{n} = O(1) = \beta_{n}$ as $n \rightarrow \infty$. If either \textnormal{(a)} $\sup_{n,j} \mathbb{E} \lvert X_{n,j} \rvert^{r} < \infty$ for $1 < r \leqslant 2$ verifying
\begin{equation}\label{eq:3.2}
\begin{cases}
  \sum_{n=1}^{\infty} \lvert \Delta_{n} \rvert^{-r} n^{1 - r}, & 0 \leqslant \gamma < 1/r \\[5pt]
  \sum_{n=1}^{\infty} \lvert \Delta_{n} \rvert^{-r} n^{1 - r} \log n, & \gamma = 1/r \\[5pt]
  \sum_{n=1}^{\infty} \lvert \Delta_{n} \rvert^{-r} n^{r(\gamma - 1)}, & 1/r < \gamma < 1,
\end{cases}
\end{equation}
or \textnormal{(b)} $\sup_{n,j} \mathbb{E} \lvert X_{n,j} \rvert^{r} < \infty$ for some $r > 2$ satisfying
\begin{equation}\label{eq:3.3}
\begin{cases}
  \sum_{n=1}^{\infty} \left[\lvert \Delta_{n} \rvert^{-r} + \lvert \Delta_{n} \rvert^{2(1 - r)} \right] n^{1 - r} < \infty, & 0 \leqslant \gamma < 1/r \\[5pt]
  \sum_{n=1}^{\infty} \left[\lvert \Delta_{n} \rvert^{-r} \log n + \lvert \Delta_{n} \rvert^{2(1 - r)} \right] n^{1 - r} < \infty, & \gamma = 1/r \\[5pt]
  \sum_{n=1}^{\infty} \left[\lvert \Delta_{n} \rvert^{-r} + \lvert \Delta_{n} \rvert^{2r(\gamma - 1)} \right] n^{r(\gamma - 1)} < \infty, & 1/r < \gamma < 1/2 \\[5pt]
  \sum_{n=1}^{\infty} \left[\lvert \Delta_{n} \rvert^{-r} n^{r/2 - 1} + \lvert \Delta_{n} \rvert^{2(1 - r)} \log^{r - 1} n \right] n^{1 - r} < \infty, & \gamma = 1/2 \\[5pt]
  \sum_{n=1}^{\infty} \lvert \Delta_{n} \rvert^{-r} n^{r(\gamma - 1)} < \infty, & 1/2 < \gamma < 1,
\end{cases}
\end{equation}
then $\widehat{\tau}_{n}$ converges completely to $\tau^{\ast}$. Furthermore, for all $\varepsilon > 0$, $\sum_{n=1}^{\infty} \mathbb{E}\left(\lvert \widehat{\tau}_{n} - \tau^{\ast} \rvert - \varepsilon \right)_{+}^{r} < \infty$.
\end{theorem}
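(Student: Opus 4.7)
The plan is to combine the classical Kokoszka--Leipus signal-plus-noise decomposition of the CUSUM statistic with a Rosenthal-type bound for the weighted partial sums of the $X_{n,j}$'s derived from~\eqref{eq:2.1}. First, write $U_{n,k}=E_{n,k}+V_{n,k}$, where $E_{n,k}$ arises from replacing each $Y_{n,j}$ by its mean and $V_{n,k}$ is the CUSUM applied to the zero-mean array $\{X_{n,j}\}$. Assuming (as is standard in this literature) that the baseline $\mu_{n,k}$ is constant in $k$ for each row, it cancels in both weighted sums, so $E_{n,k}$ depends only on $\Delta_n$, $\tau^*$ and $\gamma$. A direct computation gives $|E_{n,k}|=|\Delta_n|n^{1-\gamma}\varphi_\gamma(k/n;\tau^*)$, where $\varphi_\gamma(\cdot;\tau^*)$ vanishes at $0$ and $1$ and attains its unique global maximum $(\tau^*(1-\tau^*))^{1-\gamma}$ at $\tau^*$; hence, for every $\varepsilon>0$ there exists $h(\varepsilon)>0$ with $\sup_{|k/n-\tau^*|>\varepsilon}|E_{n,k}| \le |E_{n,k^*}| - h(\varepsilon)|\Delta_n|n^{1-\gamma}$.

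Using the defining maximality $|U_{n,\widehat{k}_n}|\ge|U_{n,k^*}|$ together with the triangle inequality, this gap forces, on $\{|\widehat{\tau}_n-\tau^*|>\varepsilon\}$, the bound $2\max_{1\le k<n}|V_{n,k}| \ge h(\varepsilon)|\Delta_n|n^{1-\gamma}$. Markov's inequality of order $r$ then yields
\[
\mathbb{P}\{|\widehat{\tau}_n-\tau^*|>\varepsilon\} \le C(\varepsilon)\,|\Delta_n|^{-r}n^{-r(1-\gamma)}\,\mathbb{E}\Bigl[\max_{1\le k<n}|V_{n,k}|^r\Bigr].
\]
Next, writing $V_{n,k} = \frac{n^\gamma}{k^\gamma(n-k)^\gamma}\bigl(S_{n,k}-\tfrac{k}{n}S_{n,n}\bigr)$ with $S_{n,k}=\sum_{j=1}^{k}X_{n,j}$, split $\{1,\dots,n-1\}$ into a central block $[\varepsilon_0 n,(1-\varepsilon_0)n]$, where the weight is bounded by $C(\varepsilon_0)n^{-\gamma}$, and two boundary blocks treated via dyadic decomposition. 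The techniques used in the proof of Theorem~\ref{thr:3}, when applied with $q=r$ to inequality~\eqref{eq:2.1} under $\sup_{n,j}\mathbb{E}|X_{n,j}|^r<\infty$, produce the bounds $\mathbb{E}\max_i|S_{n,i}|^r \le Cn$ for $1<r\le 2$ and $\mathbb{E}\max_i|S_{n,i}|^r \le C(n+n^{r/2})$ for $r>2$.

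Inserting these bounds into the Markov estimate and carrying out a regime-by-regime analysis matches the summands in~\eqref{eq:3.2}--\eqref{eq:3.3}: the central block contributes the $|\Delta_n|^{-r}n^{1-r}$-type term, the boundary blocks contribute the $|\Delta_n|^{-r}n^{r(\gamma-1)}$-type term when $\gamma>1/r$, the critical exponents $\gamma=1/r$ and $\gamma=1/2$ generate the logarithmic corrections through the dyadic summation, and the Rosenthal variance summand $n^{r/2}$ (present only for $r>2$) yields the additional $|\Delta_n|^{2(1-r)}$-contributions in~\eqref{eq:3.3}. Summation over $n$ then produces $\sum_n \mathbb{P}\{|\widehat{\tau}_n-\tau^*|>\varepsilon\}<\infty$. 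The $r$th-mean assertion is immediate: since $\widehat{\tau}_n,\tau^*\in[0,1]$, one has $(|\widehat{\tau}_n-\tau^*|-\varepsilon)_+^r \le I_{\{|\widehat{\tau}_n-\tau^*|>\varepsilon\}}$ pointwise, so $\mathbb{E}(|\widehat{\tau}_n-\tau^*|-\varepsilon)_+^r \le \mathbb{P}\{|\widehat{\tau}_n-\tau^*|>\varepsilon\}$ and the corresponding series is bounded by the one just shown to converge.

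The main obstacle lies in the moment bound and the case analysis: matching every summand of \eqref{eq:3.2}--\eqref{eq:3.3} requires a careful interplay between the dyadic boundary treatment of the weight $n^\gamma/(k(n-k))^\gamma$ and a clean separation of the Marcinkiewicz--Zygmund contribution (which yields the $|\Delta_n|^{-r}$-terms) from the Rosenthal variance contribution (which yields the $|\Delta_n|^{2(1-r)}$-terms appearing only for $r>2$); the critical thresholds $\gamma=1/r$ and $\gamma=1/2$ are exactly the values at which a summation-by-parts of the dyadic boundary bound incurs a logarithmic loss.
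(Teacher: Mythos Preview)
Your strategy diverges from the paper's and contains a genuine gap at the moment-bound step. The paper does \emph{not} pass through a Rosenthal-type inequality $\mathbb{E}\max_i|S_{n,i}|^r\le C(n+n^{r/2})$ at all: it first uses the pointwise Kokoszka--Leipus estimate $|\widehat\tau_n-\tau^*|\le C(\gamma,\tau^*)\,n^{\gamma-1}|\Delta_n|^{-1}\max_k|U_{n,k}-\mathbb{E}U_{n,k}|$ (which makes your extra assumption that $\mu_{n,k}$ is constant in $k$ unnecessary, since the centred CUSUM kills $\mu_{n,k}$ anyway), then applies Abel's lemma to replace $\max_k k^{-\gamma}\big|\sum_{j\le k}X_{n,j}\big|$ by $2\max_k\big|\sum_{j\le k}X_{n,j}/j^\gamma\big|$, and finally feeds the rescaled array $Z_{n,j}:=n^{\gamma-1}X_{n,j}/(|\Delta_n|j^\gamma)$ directly into Theorem~\ref{thr:2} (for complete convergence) and Corollary~\ref{cor:1} (for the moment series). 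The five cases in \eqref{eq:3.2}--\eqref{eq:3.3} then come out of the elementary asymptotics of $\sum_{j<n}j^{-s}$ for $s=r\gamma$ and $s=2\gamma$, with the specific choice of $q$ in assumption~(d') tuned to each range of~$\gamma$.

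The problem with your route is that inequality~\eqref{eq:2.1} is stated only for the \emph{truncated} variables $g_\ell(X_{n,j})$, and the machinery of Theorem~\ref{thr:3} is designed to prove convergence of a \emph{series} $\sum_n c_n\mathbb{E}(\cdot-\varepsilon)_+^p$, not to extract a uniform-in-$n$ bound on $\mathbb{E}\max_i|S_{n,i}|^r$. Deducing a clean Rosenthal inequality from \eqref{eq:2.1} alone would require an additional argument (truncation, integration of the tail bound, and control of the first term when $\eta$ varies with $\lambda$) that you have not supplied and that the paper never establishes in this generality; invoking ``the techniques of Theorem~\ref{thr:3} with $q=r$'' is also inconsistent with the hypothesis $q>p$ there. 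Without that bound your Markov step is unsupported, and the subsequent dyadic case analysis---which you only sketch---cannot be checked. One point in your favour: your observation that $|\widehat\tau_n-\tau^*|\le 1$ implies $(|\widehat\tau_n-\tau^*|-\varepsilon)_+^r\le I_{\{|\widehat\tau_n-\tau^*|>\varepsilon\}}$ is correct and gives the second conclusion immediately from the first, which is simpler than the paper's separate verification via Corollary~\ref{cor:1}.
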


\begin{proof}
As in proof of Theorem 1.1 of \cite{Kokoszka98} (see \cite{Kokoszka98}, page $391$), it can be shown that
\begin{equation*}
\lvert \Delta_{n} \rvert (1 - \gamma)(\tau^{\ast})^{-\gamma}(1 - \tau^{\ast}) \min\{\tau^{\ast},1 - \tau^{\ast} \} \lvert \tau^{\ast} - \widehat{\tau}_{n} \rvert \leqslant 2 n^{\gamma - 1} \max_{1 \leqslant k < n} \lvert U_{n,k} - \mathbb{E} U_{n,k} \rvert,
\end{equation*}
whence
\begin{equation}\label{eq:3.4}
\lvert \tau^{\ast} - \widehat{\tau}_{n} \rvert \leqslant C(\gamma, \tau^{\ast}) \frac{n^{\gamma - 1}}{\lvert \Delta_{n} \rvert} \max_{1 \leqslant k < n} \lvert U_{n,k} - \mathbb{E} U_{n,k} \rvert.
\end{equation}
By employing Abel's lemma (see \cite{Chow97}, page $114$), we have
\begin{equation}\label{eq:3.5}
\begin{split}
&\max_{1 \leqslant k < n} \lvert U_{n,k} - \mathbb{E} U_{n,k} \rvert \\
&\quad \leqslant \max_{1 \leqslant k < n} \frac{1}{k^{\gamma}} \abs{\sum_{j=1}^{k} (Y_{n,j} - \mathbb{E} Y_{n,j})} + \max_{1 \leqslant k < n} \frac{1}{(n - k)^{\gamma}} \abs{\sum_{j=k+1}^{n} (Y_{n,j} - \mathbb{E} Y_{n,j})} \\
&\quad = \max_{1 \leqslant k < n} \frac{1}{k^{\gamma}} \abs{\sum_{j=1}^{k} (Y_{n,j} - \mathbb{E} Y_{n,j})} + \max_{1 \leqslant k < n} \frac{1}{k^{\gamma}} \abs{\sum_{j=1}^{k} (Y_{n,n - j + 1} - \mathbb{E} Y_{n,n - j + 1})} \\
&\quad = \max_{1 \leqslant k < n} \frac{1}{k^{\gamma}} \abs{\sum_{j=1}^{k} X_{n,j}} + \max_{1 \leqslant k < n} \frac{1}{k^{\gamma}} \abs{\sum_{j=1}^{k} X_{n,n - j + 1}} \\
&\quad \leqslant 2 \max_{1 \leqslant k < n} \abs{\sum_{j=1}^{k} \frac{X_{n,j}}{j^{\gamma}}} + 2 \max_{1 \leqslant k < n} \abs{\sum_{j=1}^{k} \frac{X_{n,n - j + 1}}{j^{\gamma}}}
\end{split}
\end{equation}
and it suffices to prove that, for any $\varepsilon > 0$,
\begin{gather}
\sum_{n=2}^{\infty} \mathbb{P} \left\{ \max_{1 \leqslant k < n} \abs{\sum_{j=1}^{k} \frac{n^{\gamma - 1} X_{n,j}}{\lvert \Delta_{n} \rvert j^{\gamma}}} > \varepsilon \right\} < \infty, \label{eq:3.6} \\
\sum_{n=2}^{\infty} \mathbb{P} \left\{ \max_{1 \leqslant k < n} \abs{\sum_{j=1}^{k} \frac{n^{\gamma - 1} X_{n,n - j + 1}}{\lvert \Delta_{n} \rvert j^{\gamma}}} > \varepsilon \right\} < \infty. \label{eq:3.7}
\end{gather}
For \eqref{eq:3.6} notice that $\mathbb{E} X_{n,k} = 0$ and for each $s > 0$,
\begin{equation}\label{eq:3.8}
\sum_{j=1}^{n} j^{-s} \leqslant
\begin{cases}
  C(s) n^{1 - s}, & s < 1 \\[5pt]
  C(s) \log n, & s = 1 \\[5pt]
  C(s), & s > 1.
\end{cases}
\end{equation}
Thus,
\begin{align*}
\max_{1 \leqslant k < n} \abs{\sum_{j=1}^{k} \mathbb{E} \left(\frac{n^{\gamma - 1} X_{n,j}}{\lvert \Delta_{n} \rvert j^{\gamma}} I_{\left\{\frac{n^{\gamma - 1} \lvert X_{n,j} \rvert}{\lvert \Delta_{n} \rvert j^{\gamma}} \leqslant \delta \right\}} \right)} &= \max_{1 \leqslant k < n} \abs{\sum_{j=1}^{k} \mathbb{E} \left(\frac{n^{\gamma - 1} X_{n,j}}{\lvert \Delta_{n} \rvert j^{\gamma}} I_{\left\{\frac{n^{\gamma - 1} \lvert X_{n,j} \rvert}{\lvert \Delta_{n} \rvert j^{\gamma}} > \delta \right\}} \right)} \\
&\leqslant \sum_{j=1}^{n-1} \frac{n^{\gamma - 1}}{\lvert \Delta_{n} \rvert j^{\gamma}} \mathbb{E} \lvert X_{n,j} \rvert I_{\left\{\frac{n^{\gamma - 1} \lvert X_{n,j} \rvert}{\lvert \Delta_{n} \rvert j^{\gamma}} > \delta \right\}} \\
&\leqslant \delta^{1 - r} \sum_{j=1}^{n-1} \frac{n^{r(\gamma - 1)}}{\lvert \Delta_{n} \rvert^{r} j^{r\gamma}} \mathbb{E} \lvert X_{n,j} \rvert^{r} \\
&\leqslant \delta^{1 - r} \frac{n^{r(\gamma - 1)}}{\lvert \Delta_{n} \rvert^{r}} \bigg(\sup_{n,j} \mathbb{E} \lvert X_{n,j} \rvert^{r} \bigg) \sum_{j=1}^{n-1} j^{-r\gamma} \\
&\leqslant
\begin{cases}
  C(r,\delta,\gamma) \lvert \Delta_{n} \rvert^{-r} n^{1 - r}, & 0 \leqslant \gamma < 1/r \\[5pt]
  C(r,\delta) \lvert \Delta_{n} \rvert^{-r} n^{1 - r} \log n, & \gamma = 1/r \\[5pt]
  C(r,\delta,\gamma) \lvert \Delta_{n} \rvert^{-r} n^{r(\gamma - 1)}, & 1/r < \gamma < 1
\end{cases}
\end{align*}
and \eqref{eq:3.2}-\eqref{eq:3.3} imply
\begin{equation}\label{eq:3.9}
\max_{1 \leqslant k < n} \abs{\sum_{j=1}^{k} \mathbb{E} \left(\frac{n^{\gamma - 1} X_{n,j}}{\lvert \Delta_{n} \rvert j^{\gamma}} I_{\left\{\frac{n^{\gamma - 1} \lvert X_{n,j} \rvert}{\lvert \Delta_{n} \rvert j^{\gamma}} \leqslant \delta \right\}} \right)} = o(1), \quad n \rightarrow \infty.
\end{equation}
According to Markov inequality and \eqref{eq:3.8}, we get
\begin{equation}\label{eq:3.10}
\begin{split}
\sum_{j=1}^{n-1} \mathbb{P} \{\lvert X_{n,j} \rvert > \varepsilon n^{1 - \gamma} j^{\gamma} \lvert \Delta_{n} \rvert \} &\leqslant \sum_{j=1}^{n-1} \frac{n^{r(\gamma - 1)} \mathbb{E} \lvert X_{n,j} \rvert^{r}}{\varepsilon^{r} \lvert \Delta_{n} \rvert^{r} j^{r \gamma}} \\
&\leqslant \frac{n^{r(\gamma - 1)}}{\varepsilon^{r} \lvert \Delta_{n} \rvert^{r}} \sup_{n,j} \mathbb{E} \lvert X_{n,j} \rvert^{r} \sum_{j=1}^{n-1} j^{-r \gamma} \\
&\leqslant
\begin{cases}
  C(r,\varepsilon,\gamma) \lvert \Delta_{n} \rvert^{-r} n^{1 - r}, & 0 \leqslant \gamma < 1/r \\[5pt]
  C(r,\varepsilon) \lvert \Delta_{n} \rvert^{-r} n^{1 - r} \log n, & \gamma = 1/r \\[5pt]
  C(r,\varepsilon,\gamma) \lvert \Delta_{n} \rvert^{-r} n^{r(\gamma - 1)}, & 1/r < \gamma < 1
\end{cases}
\end{split}
\end{equation}
and \eqref{eq:3.2}-\eqref{eq:3.3} ensure assumption (i') in Theorem~\ref{thr:2}. Moreover,
\begin{align*}
\left[\sum_{j=1}^{n-1} \mathbb{V} \left(\frac{n^{\gamma - 1} X_{n,j}}{\lvert \Delta_{n} \rvert j^{\gamma}}I_{ \left\{\frac{n^{\gamma - 1} \lvert X_{n,j} \rvert}{\lvert \Delta_{n} \rvert j^{\gamma}} \leqslant \delta \right\}} \right) \right]^{q} &\leqslant \left[\sum_{j=1}^{n-1} \frac{n^{2(\gamma - 1)}}{\Delta_{n}^{2} j^{2\gamma}} \mathbb{E} X_{n,j}^{2} I_{ \left\{\frac{n^{\gamma - 1} \lvert X_{n,j} \rvert}{\lvert \Delta_{n} \rvert j^{\gamma}} \leqslant \delta \right\}} \right]^{q} \\
&\leqslant \delta^{q(2 - r)} \left[\sum_{j=1}^{n-1} \frac{n^{r(\gamma - 1)}}{\lvert \Delta_{n} \rvert^{r} j^{r\gamma}} \mathbb{E} \lvert X_{n,j} \rvert^{r} \right]^{q} \\
&\leqslant C(r,\delta,q) \frac{n^{q r(\gamma - 1)}}{\lvert \Delta_{n} \rvert^{qr}} \bigg(\sup_{n,j} \mathbb{E} \lvert X_{n,j} \rvert^{r} \bigg)^{q} \left(\sum_{j=1}^{n-1} j^{-r\gamma} \right)^{q} \\
&\leqslant
\begin{cases}
  C(r,q,\delta,\gamma) \lvert \Delta_{n} \rvert^{-qr} n^{q(1 - r)}, & 0 \leqslant \gamma < 1/r \\[5pt]
  C(r,q,\delta) \lvert \Delta_{n} \rvert^{-qr} n^{q(1 - r)} \log^{q} n, & \gamma = 1/r \\[5pt]
  C(r,q,\delta,\gamma) \lvert \Delta_{n} \rvert^{-qr} n^{qr(\gamma - 1)}, & 1/r < \gamma < 1
\end{cases}
\end{align*}
when $r \leqslant 2$, and
\begin{align*}
\left[\sum_{j=1}^{n-1} \mathbb{V} \left(\frac{n^{\gamma - 1} X_{n,j}}{\lvert \Delta_{n} \rvert j^{\gamma}}I_{ \left\{\frac{n^{\gamma - 1} \lvert X_{n,j} \rvert}{\lvert \Delta_{n} \rvert j^{\gamma}} \leqslant \delta \right\}} \right) \right]^{q} &\leqslant \left[\sum_{j=1}^{n-1} \frac{n^{2(\gamma - 1)}}{\Delta_{n}^{2} \, j^{2\gamma}} \mathbb{E} X_{n,j}^{2} I_{ \left\{\frac{n^{\gamma - 1} \lvert X_{n,j} \rvert}{\lvert \Delta_{n} \rvert j^{\gamma}} \leqslant \delta \right\}} \right]^{q} \\
&\leqslant \left(\sup_{n,j} \mathbb{E} \lvert X_{n,j} \rvert^{r} \right)^{2q/r} \frac{n^{2q(\gamma - 1)}}{\lvert \Delta_{n} \rvert^{2q}} \left(\sum_{j=1}^{n-1} j^{-2\gamma} \right)^{q} \\
&\leqslant
\begin{cases}
  C(q,r,\gamma) \lvert \Delta_{n} \rvert^{-2q} n^{-q}, & 0 \leqslant \gamma < 1/2 \\[5pt]
  C(q,r) \lvert \Delta_{n} \rvert^{-2q} n^{-q} \log^{q} n, & \gamma = 1/2 \\[5pt]
  C(q,r,\gamma) \lvert \Delta_{n} \rvert^{-2q} n^{2q(\gamma - 1)}, & 1/2 < \gamma < 1
\end{cases}
\end{align*}
whenever $r > 2$. By taking
\begin{equation*}
q =
\begin{cases}
  r(1 - \gamma), & 1/r < \gamma < 1/2 \\
  r/2, & 1/2 < \gamma < 1 \\
  r - 1, & \text{otherwise} \\
\end{cases}
\end{equation*}
in the latter inequality and any $q \geqslant 1$ in the former inequality, it follows
\begin{equation}\label{eq:3.11}
\sum_{n=2}^{\infty} \left[\sum_{j=1}^{n-1} \mathbb{V} \left(\frac{n^{\gamma - 1} X_{n,j}}{\lvert \Delta_{n} \rvert j^{\gamma}}I_{ \left\{\frac{n^{\gamma - 1}  \lvert X_{n,j} \rvert}{\lvert \Delta_{n} \rvert j^{\gamma}} \leqslant \delta \right\}} \right) \right]^{q} < \infty
\end{equation}
via \eqref{eq:3.2}-\eqref{eq:3.3}. Hence, \eqref{eq:3.6} holds from Theorem~\ref{thr:2} with $c_{n} = 1$ and \eqref{eq:3.9}. Similarly, it can be shown \eqref{eq:3.7}; the details are omitted. \\

\noindent For all reals numbers $x,y$ and $p,\varepsilon > 0$, we have
\begin{equation*}
(\lvert x + y \rvert - \varepsilon)_{+}^{p} \leqslant \max\{1,2^{p - 1} \} \left[\left(\lvert x \rvert - \frac{\varepsilon}{2} \right)_{+}^{p} + \left(\lvert y \rvert - \frac{\varepsilon}{2} \right)_{+}^{p} \right],
\end{equation*}
whence, from \eqref{eq:3.4} and \eqref{eq:3.5}, it remains to prove
\begin{gather}
\sum_{n=2}^{\infty} \mathbb{E} \left[\max_{1 \leqslant k < n} \abs{\sum_{j=1}^{k} \frac{n^{\gamma - 1} X_{n,j}}{\lvert \Delta_{n} \rvert j^{\gamma}}} - \varepsilon \right]_{+}^{r} < \infty, \label{eq:3.12} \\
\sum_{n=2}^{\infty} \mathbb{P} \left[\max_{1 \leqslant k < n} \abs{\sum_{j=1}^{k} \frac{n^{\gamma - 1} X_{n,n - j + 1}}{\lvert \Delta_{n} \rvert j^{\gamma}}} - \varepsilon \right]_{+}^{r} < \infty \label{eq:3.13}
\end{gather}
for every $\varepsilon > 0$. In addition to \eqref{eq:3.10}, we still have
\begin{align*}
\sum_{j=1}^{n-1} \mathbb{P} \left\{\frac{n^{1 - \gamma} \lvert X_{n,j} \rvert}{\lvert \Delta_{n} \rvert j^{\gamma}} > \delta \right\} &\leqslant
\begin{cases}
  C(r,\delta,\gamma) \lvert \Delta_{n} \rvert^{-r} n^{1 - r}, & 0 \leqslant \gamma < 1/r \\[5pt]
  C(r,\delta) \lvert \Delta_{n} \rvert^{-r} n^{1 - r} \log n, & \gamma = 1/r \\[5pt]
  C(r,\delta,\gamma) \lvert \Delta_{n} \rvert^{-r} n^{r(\gamma - 1)}, & 1/r < \gamma < 1,
\end{cases}
\end{align*}
and
\begin{align*}
\sum_{j=1}^{n-1} \int_{\delta^{r}}^{\infty} \mathbb{P} \left\{\frac{n^{r(\gamma - 1)} \lvert X_{n,j} \rvert^{r}}{\lvert \Delta_{n} \rvert^{r} j^{r\gamma}} > t \right\} \mathrm{d}t &\leqslant \sum_{j=1}^{n-1} \frac{n^{r(\gamma - 1)} \mathbb{E} \lvert X_{n,j} \rvert^{r}}{\lvert \Delta_{n} \rvert^{r} j^{r\gamma}} I_{\left\{\frac{n^{\gamma - 1} \lvert X_{n,j} \rvert}{\lvert \Delta_{n} \rvert j^{\gamma}} > \delta \right\}} \\
&\leqslant \frac{n^{r(\gamma - 1)}}{\lvert \Delta_{n} \rvert^{r}} \bigg(\sup_{n,j} \mathbb{E} \lvert X_{n,j} \rvert^{r} \bigg) \sum_{j=1}^{n-1} j^{-r \gamma} \\
&\leqslant
\begin{cases}
  C(r,\gamma) \lvert \Delta_{n} \rvert^{-r} n^{1 - r}, & 0 \leqslant \gamma < 1/r \\[5pt]
  C(r) \lvert \Delta_{n} \rvert^{-r} n^{1 - r} \log n, & \gamma = 1/r \\[5pt]
  C(r,\gamma) \lvert \Delta_{n} \rvert^{-r} n^{r(\gamma - 1)}, & 1/r < \gamma < 1.
\end{cases}
\end{align*}
From \eqref{eq:3.2}-\eqref{eq:3.3}, assumptions (b') and (c') in Corollary~\ref{cor:1} hold, as well as (i') in Theorem~\ref{thr:2}. Since \eqref{eq:3.11} remains valid, and
\begin{align*}
\sum_{j=1}^{n-1} \mathbb{E} \left(\frac{n^{\gamma - 1} \lvert X_{n,j} \rvert}{\lvert \Delta_{n} \rvert j^{\gamma}} I_{\left\{\frac{n^{\gamma - 1}\lvert X_{n,j} \rvert}{\lvert \Delta_{n} \rvert j^{\gamma}} > \delta \right\}} \right) &\leqslant \delta^{1 - r} \sum_{j=1}^{n-1} \frac{n^{r(\gamma - 1)} \mathbb{E} \lvert X_{n,j} \rvert^{r}}{\lvert \Delta_{n} \rvert^{r} j^{r\gamma}} \\
&\leqslant \frac{n^{r(\gamma - 1)}}{\delta^{r - 1} \lvert \Delta_{n} \rvert^{r}} \bigg(\sup_{n,j} \mathbb{E} \lvert X_{n,j} \rvert^{r} \bigg) \sum_{j=1}^{n-1} j^{-r \gamma} \\
&\leqslant
\begin{cases}
  C(r,\delta,\gamma) \lvert \Delta_{n} \rvert^{-r} n^{1 - r}, & 0 \leqslant \gamma < 1/r \\[5pt]
  C(r,\delta) \lvert \Delta_{n} \rvert^{-r} n^{1 - r} \log n, & \gamma = 1/r \\[5pt]
  C(r,\delta,\gamma) \lvert \Delta_{n} \rvert^{-r} n^{r(\gamma - 1)}, & 1/r < \gamma < 1,
\end{cases}
\end{align*}
assumption (d') in Corollary~\ref{cor:1} is also met. Hence, \eqref{eq:3.12} follows via Corollary~\ref{cor:1}. Analogously, we can demonstrate \eqref{eq:3.13}. The proof is complete.
\end{proof}

\begin{remark}
If $\Delta_{n} = O(1)$ as $n \rightarrow \infty$, assumption \eqref{eq:3.3} can be replaced by
\begin{equation*}
\begin{cases}
  \sum_{n=1}^{\infty} \lvert \Delta_{n} \rvert^{2(1 - r)} n^{1 - r} < \infty, & 0 \leqslant \gamma < 1/r \\[5pt]
  \sum_{n=1}^{\infty} \lvert \Delta_{n} \rvert^{2(1 - r)} n^{1 - r} \log n < \infty, & \gamma = 1/r \\[5pt]
  \sum_{n=1}^{\infty} \lvert \Delta_{n} \rvert^{2r(\gamma - 1)} n^{r(\gamma - 1)} < \infty, & 1/r < \gamma < 1/2 \\[5pt]
  \sum_{n=1}^{\infty} \lvert \Delta_{n} \rvert^{2(1 - r)} n^{-r/2} < \infty, & \gamma = 1/2 \\[5pt]
  \sum_{n=1}^{\infty} \lvert \Delta_{n} \rvert^{-r} n^{r(\gamma - 1)} < \infty, & 1/2 < \gamma < 1.
\end{cases}
\end{equation*}
Since, for every $\varepsilon > 0$,
\begin{equation*}
\mathbb{E} \lvert \widehat{\tau}_{n} - \tau^{\ast} \rvert^{r} \leqslant 2^{r - 1} \left[\mathbb{E}\left(\lvert \widehat{\tau}_{n} - \tau^{\ast} \rvert - \varepsilon \right)_{+}^{r} + \varepsilon^{r} \right]
\end{equation*}
assumptions of Theorem~\ref{thr:4} also entail that $\widehat{\tau}_{n}$ is consistent in the $r$th mean for $\tau^{\ast}$.
\end{remark}

\noindent In \cite{Shi09}, the authors investigated strong consistency of $\widehat{\tau}_{n}$ by considering the model
\begin{equation}\label{eq:3.14}
Y_{i} =
\begin{cases}
   X_{i}, & i \leqslant k^{\ast} \\[5pt]
   \delta_{0} + X_{i}, & i > k^{\ast}
\end{cases}
\end{equation}
where $\{X_{i}, \, i \geqslant 1 \}$ is either a sequence of independent random variables or a sequence having certain dependence structure. Let us emphasize that, not only model \eqref{eq:3.1} is more general than \eqref{eq:3.14}, but also asymptotic results of \cite{Shi09} are no longer guaranteed in our scenario. As a matter of fact, strong convergence for arrays of random variables requires stronger conditions on (absolute) moments of the random variables (see, for instance, \cite{Moricz89}).

\subsection{Simulations}\label{sec:3.1}

In model (3.1), we shall assume the existence of a change-point $k^{\ast}$ and
\begin{equation}\label{eq:3.15}
(X_{n,1},X_{n,2},\ldots,X_{n,n}) \overset{\mathrm{d}}{=} \mathscr{N}(\boldsymbol{0},\boldsymbol{\Sigma}_{n})
\end{equation}
i.e., the $n$-dimensional random variable $(X_{n,1},X_{n,2},\ldots,X_{n,n})$ has (nonsingular) multivariate normal distribution with mean vector zero and covariance matrix
\begin{equation*}
\boldsymbol{\Sigma}_{n} = \left[
\begin{array}{ccccc}
  1/n + \sigma^{-n - 1}/(\sigma - 1) & -\sigma^{-n - 3} & \ldots & -\sigma^{-2n - 1} \\
  -\sigma^{-n - 3} & 2/n + 2\sigma^{-n - 2}/(\sigma - 1) & \ldots & -\sigma^{-2n - 2} \\
  \vdots & \vdots & \ddots & \vdots \\
  -\sigma^{-2n - 1} & -\sigma^{-2n - 2} & \ldots & 1 + n\sigma^{-2n}/(\sigma - 1)
\end{array}
\right] \quad (\sigma > 1).
\end{equation*}
It is well-known that if \eqref{eq:3.15} holds with $\boldsymbol{\Sigma}_{n}$ having non-positive off-diagonal entries then, for each $n \geqslant 1$, the sequence of random variables $\{X_{n,j}, \, 1 \leqslant j \leqslant n\}$ is negatively associated (see, for instance, Theorem 8 of \cite{Karlin83}). Further, $X_{n,j} \overset{\mathrm{d}}{=} \mathscr{N}\big(0,j/n + j\sigma^{-n - j}/(\sigma - 1) \big)$ (see \cite{Tong90}) and
\begin{equation*}
\mathbb{E} \lvert X_{n,j} \rvert^{r} = \frac{2^{r/2}}{\sqrt{\pi}}  \Gamma\left(\frac{r + 1}{2} \right) \left(\frac{j}{n} + \frac{j \sigma^{-n - j}}{\sigma - 1} \right)^{r/2} \qquad (r > 0)
\end{equation*}
with $\Gamma(z) := \int_{0}^{\infty} t^{z - 1} \mathrm{e}^{-t} \, \mathrm{d}t$, $\mathrm{Re}(z) > 0$ denoting the gamma function.

For the purpose of the simulations, also note that, by taking $\Delta_{n} = n^{\theta}$ in \eqref{eq:3.3}, we have
\begin{equation*}
\begin{cases}
  \sum_{n=1}^{\infty} \left[n^{(1 - r)(2\theta + 1)} + n^{1 - (1 + \theta)r} \right], & 0 \leqslant \gamma < 1/r \\[5pt]
  \sum_{n=1}^{\infty} \left[n^{(1 - r)(2\theta + 1)} + n^{1 - (1 + \theta)r} \log n \right], & \gamma = 1/r \\[5pt]
  \sum_{n=1}^{\infty} \left[n^{r(\gamma - \theta - 1)} + n^{r(\gamma - 1)(2\theta + 1)} \right], & 1/r < \gamma < 1/2 \\[5pt]
  \sum_{n=1}^{\infty} \left[n^{(r - 1)(2\theta + 1)} \log^{r-1}n + n^{-r(\theta + 1/2)} \right], & \gamma = 1/2 \\[5pt]
  \sum_{n=1}^{\infty} n^{r(\gamma - \theta - 1)}, & 1/2 < \gamma < 1
\end{cases}
\end{equation*}
and hence, the prior five series converge if
\begin{equation}\label{eq:3.16}
	\begin{cases}
		\theta > (2 - r)/(2r - 2), & 0 \leqslant \gamma \leqslant 1/r \\
		\theta > 1/(2r - 2r \gamma) - 1/2, & 1/r < \gamma < 1/2 \\
		\theta > (2 - r)/(2r), & \gamma = 1/2 \\
		\theta > \gamma - 1 + 1/r, & 1/2 < \gamma < 1.
	\end{cases}
\end{equation}
We additionally consider the following simulation parameters: \\

\noindent i. $\sigma=2$, $\mu_{n,k}=1$, $\tau^*=0.5$
	and $\gamma = 0,0.1,0.5,0.7,0.9$; \\[-3pt]	
		
\noindent ii. $\Delta_n=n^{\theta}$ with
\begin{equation*}
\theta =
	\begin{cases}
		-0.29, -0.2, -0.09, 0, 0.1 & \mbox{if } \gamma = 0, 0.1, 0.5 \\
		-0.19, -0.1, -0.01, 0, 0.1 & \mbox{if } \gamma = 0.7 \\
		-0.01, 0, 0.1 & \mbox{if } \gamma = 0.9
	\end{cases}
\end{equation*}
where the values of $\theta$ are in agreement with \eqref{eq:3.16}; \\[-3pt]
		
\noindent iii. $n=50,100,500,1000,1500,2000,3000,4000$; \\[-3pt]
	
\noindent iv. $m=1000$ replications. \\

\noindent The simulation results are displayed in Figures \ref{tab1} and \ref{tab2}. In all cases, and as expected, one observes the convergence of the CUSUM estimator from some value of $n$ onwards.	

\begin{figure}[h!]
\centering
\includegraphics[width=1.0\textwidth]{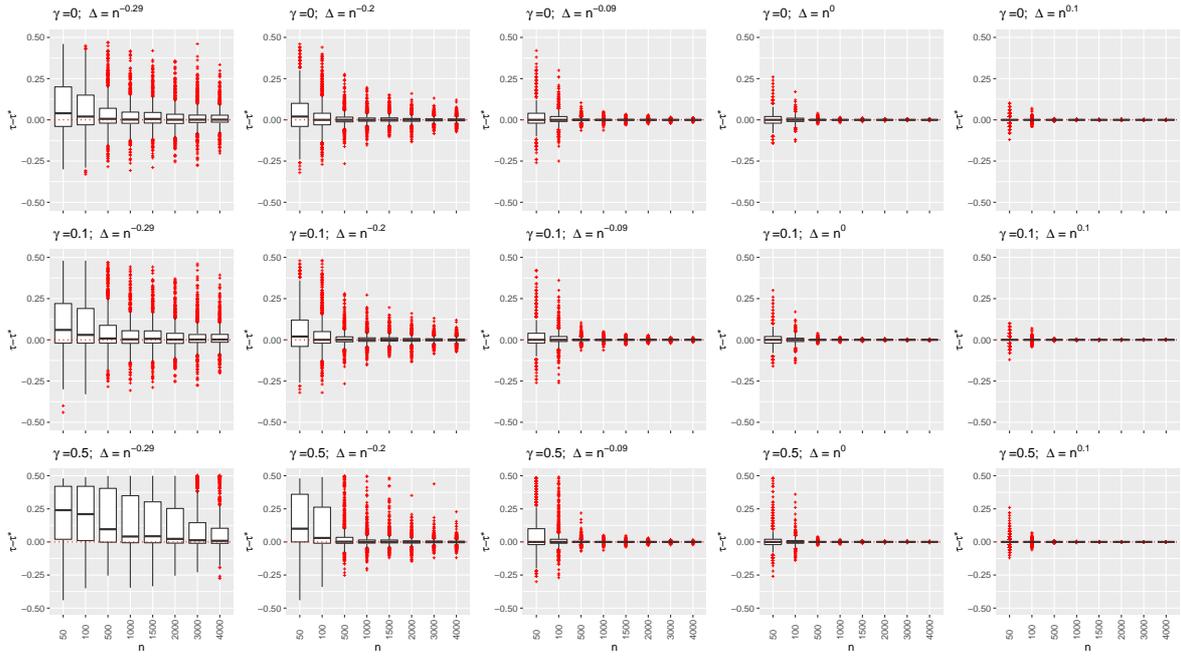}	
\caption{Simulation boxplots referring to parameters $\gamma=0,0.1,0.5$}
\label{tab1}
\end{figure}

\medskip

\begin{figure}[h!]
\centering
\includegraphics[width=1.0\textwidth]{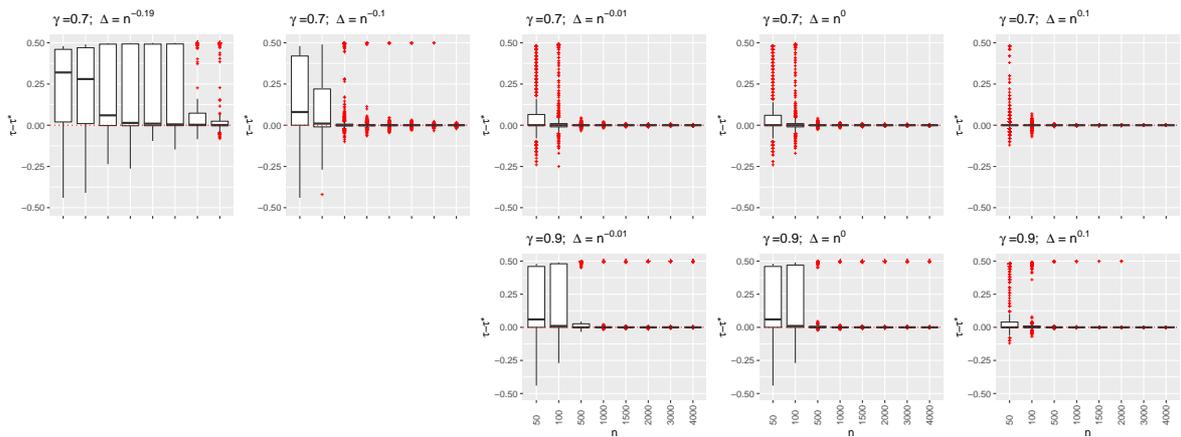}	
\caption{Simulation boxplots referring to parameters $\gamma=0.7,0.9$}
\label{tab2}
\end{figure}

\subsection*{Supplementary materials}

The \texttt{R} simulation codes are available upon request.

\section*{Acknowledgements}

This work is funded by national funds through the FCT - Fundação para a Ciência e a Tecnologia, I.P., under the scope of projects UIDB/04035/2020 (GeoBioTec) and UIDB/00297/2020 (Center for Mathematics and Applications)

\end{document}